\newtheorem{theorem}{Theorem}[section]
\newtheorem{corollary}[theorem]{Corollary}
\newtheorem{assumption}[theorem]{Assumption}
\newtheorem{lemma}[theorem]{Lemma}
\theoremstyle{definition}
\theoremstyle{remark}
   \newenvironment{eq}{\begin{equation}}{\end{equation}}
\newcommand{\cO}{{\mathcal O}}
\newcommand{\cC}{{\mathcal C}}
\newcommand{\cL}{{\mathcal L}}
\newcommand{\cU}{{\mathcal U}}
\newcommand{\cR}{{\mathcal R}}
\newcommand{\cS}{{\mathcal S}}
\newcommand{\cZ}{{\mathcal Z}}
\newcommand{\cF}{{\mathcal F}}
\newcommand{\cA}{{\mathcal A}}
\newcommand{\bQ}{\mathbb Q}
\newcommand{\bZ}{\mathbb Z}
\newcommand{\bR}{\mathbb R}
\newcommand{\bP}{\mathbb P}
\newcommand{\bA}{\mathbb A}
\newcommand{\bC}{\mathbb C}
\newcommand{\bT}{\mathbb T}
\newcommand{\ra}{\rightarrow}
\newcommand{\lra}{\longrightarrow}
\newcommand{\iso}{\simeq}
\newcommand{\rank}{{\rm rank}}
\renewcommand{\H}{{\mathrm H}}
\def\blfootnote{\xdef\@thefnmark{}\@footnotetext}
\DeclareMathOperator{\Spec}{Spec}
\DeclareMathOperator{\Eff}{\overline{Eff}}
\DeclareMathOperator{\Pic}{Pic}
\DeclareMathOperator{\CDiv}{CDiv}
\DeclareMathOperator{\Cl}{Cl}
\DeclareMathOperator{\Hom}{Hom}
\DeclareMathOperator{\Ext}{Ext}
\DeclareMathOperator{\pd}{div}
\begin{document}


\title{Differentials of Cox rings:\\ Jaczewski's
  theorem revisited}

\author{Oskar \textsc{K{\k{e}}dzierski} and Jaros\l{}aw
  A. \textsc{Wi\'sniewski}}

\thanks{Dedicated to the memory of
  Krzysztof Jaczewski (1955---1994)}

\thanks{Supported by Polish MNiSzW, grant N N201 420639. We thank
  J\"urgen Hausen, Andreas H\"oring and Yuri Prokhorov for discussions
  and remarks.  We are also greatly indebt to an anonymous referee who
  pointed out mistakes in an earlier version of this paper.}


\classification{Primary 14M25, 13N05, 14E30; Secondary 14C20,14F10}

\keyword{Cox ring, Mori Dream Space, Euler sequence, differentials,
  toric varieties}

\label{startpage}

\maketitle
\abstract{A generalized Euler sequence over a complete normal variety
  $X$ is the unique extension of the trivial bundle $V \otimes \cO_X$
  by the sheaf of differentials $\Omega_X$, given by the inclusion of
  a linear space $V\subset \Ext^1_X(\cO_X,\Omega_X)$. For $\Lambda$, a
  lattice of Cartier divisors, let $\cR_\Lambda$ denote the
  corresponding sheaf associated to $V$ spanned by the first Chern
  classes of divisors in $\Lambda$. We prove that any projective,
  smooth variety on which the bundle $\cR_\Lambda$ splits into a
  direct sum of line bundles is toric. We describe the bundle
  $\cR_\Lambda$ in terms of the sheaf of differentials on the
  characteristic space of the Cox ring, provided it is finitely
  generated. Moreover, we relate the finiteness of the module of
  sections of $\cR_\Lambda$ and of the Cox ring of $\Lambda.$ }


\section{Part I, preliminaries}
\subsection{Euler sequence}\label{EulerSection}

In the present paper $X$ is a normal irreducible complete complex
variety of dimension $n$ with $\Omega_X$ denoting the sheaf of
K\"ahler differential forms on $X$. If $H=\H^1(X,\Omega_X)$ and
$H_X=H\otimes\cO_X$ then extensions of $\Omega_X$ by the trivial sheaf
$H_X$ are classified by $\Ext^1(H_X,\Omega_X)=\Hom(H,H)$. A generalized
Euler sequence defined by Jaczewski \cite[Def.~2.1]{Jaczewski} is the
following short exact sequence of sheaves on $X$ which corresponds to
the class of identity in $\Hom(H,H)$:
\begin{eq}\label{EulerSequence}
\begin{array}{ccccccccc}
0&\lra&\Omega_X&\lra&\cR_X&\lra&H_X&\lra&0
\end{array}
\end{eq}%
The dual of the sheaf $\cR_X$ is called by Jaczewski {\em the
  potential sheaf} of $X$.

More generally, given a non-zero linear subspace $V\subseteq
\H^1(X,\Omega_X)$, setting $V_X=V\otimes\cO_X$, we get a sequence
\begin{eq}\label{EulerSequenceGen}
\begin{array}{ccccccccc}
0&\lra&\Omega_X&\lra&\cR_V&\lra&V_X&\lra&0
\end{array}
\end{eq}%
associated to the inclusion $V\hookrightarrow \H^1(X,\Omega_X)$.

\subsection{Theorem of Jaczewski}\label{JaczewskiSection}

The following theorem was proved by Jaczewski, \cite[Thm.~3.1]{Jaczewski}
\begin{theorem}\label{Jaczewski}
  A smooth complete variety $X$ is a toric variety if and only if
  there exists an effective divisor $D=\bigcup_{\alpha=1}^r D_\alpha$
  with simple normal crossing components $D_\alpha$ such that $$\cR_X=
  \bigoplus_{\alpha=1\dots r}\cO_X(-D_\alpha)$$ The divisors
  $D_\alpha$ are then the closures of the codimension one orbits of
  the torus action associated to the rays of the fan defining $X$.
\end{theorem}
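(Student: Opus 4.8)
The plan is to prove the two implications separately, the forward one being essentially a repackaging of the classical toric Euler sequence and the reverse one carrying the real content. For the forward implication I would take $X$ smooth complete toric with fan $\Sigma$, rays $\Sigma(1)=\{\rho_1,\dots,\rho_r\}$ and invariant prime divisors $D_{\rho}$, and start from the lattice sequence $0\to M\to\bZ^{\Sigma(1)}\to\Cl(X)\to 0$ in its sheafified cotangent incarnation, the toric generalized Euler sequence $0\to\Omega_X\to\bigoplus_{\rho}\cO_X(-D_{\rho})\to\Cl(X)\otimes\cO_X\to 0$. The task is to identify this with the defining sequence \eqref{EulerSequence} for $\cR_X$. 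For this I would use that on a smooth complete toric variety the Chern class map identifies $\Cl(X)\otimes\bC=\Pic(X)\otimes\bC$ with all of $H=\H^1(X,\Omega_X)$ (toric varieties carry no other degree-one Hodge classes), and that under this identification the connecting homomorphism $\H^0(\Cl(X)\otimes\cO_X)\to\H^1(X,\Omega_X)$ is the first Chern class map, hence the identity. Thus the toric sequence is the Euler sequence of the identity class, so $\cR_X\cong\bigoplus_{\rho}\cO_X(-D_{\rho})$; the $D_{\rho}$ are the closures of the codimension-one orbits and cross normally because $X$ is smooth, which is the asserted shape.

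Conversely, assume $\cR_X=\bigoplus_{\alpha}\cO_X(-D_\alpha)$ with $D=\sum_\alpha D_\alpha$ simple normal crossing. Since $X$ is smooth I would dualise \eqref{EulerSequence} to the exact sequence $0\to H^\vee\otimes\cO_X\to\bigoplus_\alpha\cO_X(D_\alpha)\xrightarrow{q}T_X\to 0$, and push the distinguished section $s_\alpha\in\H^0(\cO_X(D_\alpha))$ with zero divisor $D_\alpha$ forward to a global vector field $\xi_\alpha:=q(s_\alpha)\in\H^0(X,T_X)$. Two elementary observations get the construction going: because $s_\alpha$ vanishes along $D_\alpha$ and $q$ is $\cO_X$-linear, $\xi_\alpha$ vanishes along $D_\alpha$; and because the $s_\alpha$ trivialise $\cO_X(D_\alpha)$ over $U:=X\setminus\supp D$ while $q$ is surjective, the fields $\xi_1,\dots,\xi_r$ span the tangent space at every point of $U$. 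So $U$ is the candidate open orbit and the $\xi_\alpha$ are the candidate one-parameter generators.

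The heart of the argument is a local normal form along $D$. Near a point where exactly $D_1,\dots,D_k$ meet, in adapted coordinates $x_1,\dots,x_n$ with $D_i=\{x_i=0\}$, I would show that $q$ sends the local generator of $\cO_X(D_i)$ to a field transverse to $D_i$, so that $\xi_i=u_i\,x_i\partial_{x_i}+(\text{tangent to }D_i)$ with $u_i$ a unit, and that each $\xi_\alpha$ is logarithmic, i.e. a section of $T_X(-\log D)$. Granting this, the $\xi_\alpha$ may be simultaneously put in the diagonal form $x_i\partial_{x_i}$, whence they commute; equivalently one produces closed global logarithmic one-forms trivialising $\Omega_X(\log D)$ and dual to the $\xi_\alpha$, and reads off $[\xi_\alpha,\xi_\beta]=0$ from $d\omega=0$ through Cartan's formula. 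The commuting fields then span an $n$-dimensional abelian subalgebra of $\H^0(X,T_X)=\mathrm{Lie}\,\mathrm{Aut}(X)$, the $r-n$ linear relations among them being exactly the image of the subbundle $H^\vee\otimes\cO_X$, and they integrate to a connected abelian group $A$ acting on $X$ with open orbit $U$. The local model $x_i\partial_{x_i}$ forces each one-parameter subgroup to be a copy of $\bG_m$ rather than $\bG_a$, so $A\cong(\bC^*)^n$ and $X$ is toric with the $D_\alpha$ as its invariant divisors.

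I expect the main obstacle to be precisely this local normalisation: extracting from the abstract splitting of $\cR_X$ both the transversality of $q$ on each summand and the logarithmic nature of the $\xi_\alpha$, and then upgrading \emph{spanning on $U$ with prescribed vanishing along $D$} to genuine simultaneous linearisation and commutativity. This is where the simple-normal-crossing hypothesis and the fact that the extension class is the identity (which pins down the vanishing orders and residues, excluding non-semisimple behaviour) must enter essentially. By contrast, the passage from a commuting family of vector fields to an honest torus action with dense orbit, together with the use of completeness of $X$ to force the integrated group to be a torus, should be comparatively formal.
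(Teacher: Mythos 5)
Your plan is not the route this paper takes: the paper never proves Theorem \ref{Jaczewski} internally, but quotes it from \cite{Jaczewski}, attributes the ``only if'' part to \cite{BatyrevMelnikov}, and merely records that Jaczewski's proof of the ``if'' part analyses $\Omega_X(\log D)$ and recovers the Lie algebra of a torus from the dual of \eqref{EulerSequence}. So your proposal is essentially an attempted reconstruction of Jaczewski's original argument, whereas the paper's own contribution (Theorem \ref{GenJaczewskiThm}) reproves the hard direction, for projective $X$, by a purely algebraic mechanism: the splitting makes $\Gamma(\cR_\Lambda)$ a free $\cS_\Lambda$-module of rank $n+r$ (Lemma \ref{Rank1CoxModule}), the map $\widehat{\kappa}$ then shows $\cS_\Lambda$ is generated by $n+r$ elements (Corollary \ref{generation}), hence is a polynomial ring for dimension reasons, and Theorem \ref{CoxHuKeel2} gives toricness --- no vector fields, no integration of flows. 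Your forward implication is fine as far as it goes: it is exactly the Batyrev--Melnikov direction, and identifying the toric Euler sequence with the identity-class extension via the isomorphism $c_1:\Cl(X)\otimes\bC\to \H^1(X,\Omega_X)$ is legitimate on a smooth complete toric variety.

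The converse, however, contains a genuine gap, which you flag yourself. All steps carrying real content are introduced by ``I would show'' or ``granting this'': (a) that $q$ sends the local generator of $\cO_X(D_i)$ to a field transverse to $D_i$; (b) that each $\xi_\alpha$ is logarithmic along \emph{every} component --- your vanishing argument only gives tangency of $\xi_\alpha$ to $D_\alpha$ itself, not to $D_\beta$ for $\beta\neq\alpha$; (c) that the $\xi_\alpha$ can be \emph{simultaneously} linearised, equivalently $[\xi_\alpha,\xi_\beta]=0$. Point (c) is not a formal consequence of (a) and (b): individual normal forms $\xi_i=u_i\,x_i\partial_{x_i}+\cdots$ do not make a family of fields commute, and your alternative route --- closed global logarithmic $1$-forms dual to the $\xi_\alpha$ plus Cartan's formula --- defers rather than solves the problem, since (given the duality pairing and generic spanning) closedness of such trivialising forms is essentially equivalent to the commutativity being sought, and no independent construction of them is offered. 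You correctly observe that the SNC hypothesis and the fact that the extension class of \eqref{EulerSequence} is the \emph{identity} ``must enter essentially'' at exactly these points, but the proposal never actually uses either hypothesis; converting the identity class into the local statements (a)--(c) via residues is precisely the content of Jaczewski's analysis of $\Omega_X(\log D)$, and it is missing here. The concluding step (commuting fields integrate to a $(\bC^*)^n$-action, with completeness and the local model excluding $\bG_a$-factors) is indeed comparatively routine, but it rests entirely on the unproved core, so the argument as written does not establish the theorem.
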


The ``only if'' part of the above theorem was proved earlier by
Batyrev and Melnikov, \cite{BatyrevMelnikov}. The proof of ``if'' part
given by Jaczewski involved analysing log-differentials $\Omega_X(\log
D)$ and reconstructing the torus action on $X$. A part of the argument
is recovering the Lie algebra of the torus from the sequence dual to
\eqref{EulerSequence}.


It has been pointed to us by Yuri Prokhorov that Jaczewski's theorem
is related to a conjecture of Shokurov, \cite{Shokurov}, and results
of McKernan \cite{McKernan} and Prokhorov \cite{Prokhorov1},
\cite{Prokhorov2}, about characterization of toric varieties: see
e.g.~\cite[Conj.~1.1]{Prokhorov1} for a formulation of the problem.


\subsection{Atiyah extension and jet bundle }\label{JetSection}
A particular case of the sequence \eqref{EulerSequenceGen} is known as
the Atiyah extension and it is defined in \cite{Atiyah}. Namely, let
us consider a Cartier divisor $D$ on $X$, defined on a covering
$\cU=(U_i)_{i\in I}$ of $X$ by non-zero rational functions
$f_i\in\bC(X)^*$ satisfying $\pd(f_i)_{|U_i}=D_{|U_i}$. In other words
$D$ is defined by a \v{C}ech cochain
$(f_i)\in\cC^0(\cU,\bC(X)^*)$. Its \v{C}ech boundary
$g_{ij}=f_j/f_i\in\cO^*_X(U_i\cap U_j)$, which is a \v{C}ech cocycle
in $\cZ^1(\cU,\cO^*_X)$, determines the associated invertible sheaf, or
line bundle, $\cO_X(D)$ (note that the order of indices in our
definition of $g_{ij}$ may differ from the one for transition
functions in some standard textbooks).

The log-derivatives $d\log g_{ij}=dg_{ij}/g_{ij}\in\Omega_X(U_i\cap
U_j)$ form a cocycle in $\cZ^1(\cU,\Omega_X)$ which defines (up to the
constant coefficient, which we will ignore since it does not change
any of our computations) the first Chern class
$c_1(D)\in \H^1(X,\Omega_X)$ 
and also determines the following extension, \cite[Prop.~12,
Thm.~5]{Atiyah},
\begin{eq}\label{JetSequence}
\begin{array}{ccccccccc}
0&\lra&\Omega_X&\lra&\cR_D&\lra&\cO_X&\lra&0
\end{array}
\end{eq}%
The twisted sheaf $\cR_D\otimes\cO_X(D)$ is well known to be
isomorphic to the sheaf of first jets of sections of the line bundle
$\cO_X(D)$, \cite[Sect.~4 and 5]{Atiyah}. Indeed, let $h$ be a
rational function such that $s_i=hf_i$ is regular on $U_i$, for every
$i\in I$, hence it determines a section of $\cO_X(D)$. Then, because
$s_j=g_{ij}s_i$, we have the relation
$g_{ij}ds_i+s_idg_{ij}=ds_j$. Thus the pair $(ds_i, s_i)$ is the
associated jet section of $\cR_D\otimes\cO_X(D)$.
\par
By the construction, the twisted sequence \eqref{JetSequence}, which now
is as follows:
\begin{eq}\label{JetSequence1}
\begin{array}{ccccccccc}
0&\lra&\Omega_X\otimes\cO(D)&\lra&\cR_D\otimes\cO(D)&\lra&\cO(D)&\lra&0
\end{array}
\end{eq}%
splits as a sequence of sheaves of $\bC$-modules (but not
$\cO$-modules), see \cite[Sect.~4]{Atiyah}, and this yields the exact
sequence of global sections:
\begin{eq}\label{JetSequenceSections}
\begin{array}{ccccccccc}
0&\lra&\H^0(X,\Omega_X\otimes\cO(D))&\lra&\H^0(X,\cR_D\otimes\cO(D))&
\lra&\H^0(X,\cO(D))&\lra&0
\end{array}
\end{eq}%
\par

Jets of sections of $\cO_X(D)$ can be seen as differentials on the
total space of the dual bundle. In fact, in \cite[Sect.~2.1]{KPSW} it
is shown that the sequence \eqref{JetSequence} arises from the sequence
of sheaves of differentials associated to the projection of an
associated $\bC^*$ bundle to $X$.
This observation will be elaborated later in the second part of the
paper.

\subsection{Cox rings}\label{CoxRingSection}
From now on we assume that the variety $X$ is projective. Let $\Lambda
\subset \CDiv(X)$ be a finitely generated group of Cartier
divisors. We will assume that $\Lambda$ is free of rank $r$, so that
$\Lambda\iso\bZ^r$. The elements of $\Lambda$ will be denoted by
$\lambda$ or by $D_\lambda$, if we want to underline that they are
divisors. In fact, in the course of our arguments we will fix a
covering $\cU=(U_i)_{i\in I}$ and represent in this covering the
generators $D_\lambda$ of $\Lambda$ by rational functions
$f_i^\lambda\in\bC(X)^*$, which will imply a presentation of $\Lambda$
as a subgroup of \v{C}ech cochains $\cC^0(\cU,\bC(X)^*)$.

We assume that the first Chern class map $c_1:\Lambda\lra
\H^1(X,\Omega_X)$, defined as $\Lambda\ni D_\lambda\mapsto
c_1(\cO(D_\lambda))\in \H^1(X,\Omega_X)$, is an injection so, by abuse,
we will identify $\Lambda$ with a lattice in $\H^1(X,\Omega_X)$. We
define the following objects related to $\Lambda$:
\begin{itemize}
\item the subspace in cohomology
  $\Lambda_\bC=\Lambda\otimes\bC\subseteq \H^1(X,\Omega_X)$,
\item the sheaves of $\cO_X$-modules $\Lambda_X=\Lambda_\bC\otimes\cO_X$
  and $\cR_\Lambda$ arising as the middle term in the sequence
  \eqref{EulerSequenceGen} for $V=\Lambda_\bC$, so that we have the sequence
\begin{eq}\label{EulerSequenceLambda}
\begin{array}{ccccccccc}
0&\lra&\Omega_X&\lra&\cR_\Lambda&\lra&\Lambda_X&\lra&0
\end{array}
\end{eq}%
\item the algebraic torus $\bT_\Lambda=\Hom_{alg}(\Lambda,\bC^*)$.
\end{itemize}
The torus $\bT_\Lambda$ acts
on the graded ring of rational functions
$\cS_\Lambda = \bigoplus_{\lambda\in\Lambda} S^\lambda$ where
$$S^\lambda=\Gamma(X,\cO_X(D_\lambda)) = \{f\in\bC(X)^*:
\pd(f)+D_\lambda\geq 0\}\cup\{0\}$$ and the multiplication is as in
the field of rational functions $\bC(X)$.  We call $\cS_\Lambda$ the
Cox ring of the lattice $\Lambda$.

We note that the graded pieces $S^\lambda$ may be zero for some
$\lambda\in\Lambda$. In fact, we define a cone of effective divisors
$\Eff_\Lambda \subset \Lambda_\bR$ which is the closure of the cone
spanned by $\lambda$'s with $S^\lambda\ne 0$.  The cone $\Eff_\Lambda$
is convex and, because $X$ is projective, it is pointed which means
that it contains no non-trivial linear space. Equivalently, there exists
a linear form $\kappa: \Lambda\ra\bZ$ such that
$\kappa_{|\Eff_\Lambda}\geq 0$ and $\kappa(\lambda)\geq 1$ for every
non-zero $\lambda\in\Eff_\Lambda\cap\Lambda$ by \cite[Proposition~1.3]{BFJ}.
We define $\cS_\Lambda^+ = \bigoplus_{\kappa(\lambda)>0} S^\lambda$
which is a maximal ideal in $\cS_\Lambda$.

\par\medskip

Given a (quasi)coherent sheaf $\cF$ of $\cO_X$-modules by
$\cF(D_\lambda)$, we denote its twist $\cF\otimes
\cO(D_{\lambda})$. Next we define $\Lambda$-graded
$\cS_\Lambda$-module of sections
\begin{eq}\label{ModuleOfSections}
  \Gamma_\Lambda(\cF) = \bigoplus_{\lambda\in\Lambda}
  \H^0\left(X,\cF(D_\lambda)\right)
\end{eq}%
We will skip the subscript and write $\Gamma(\cF)$ if the choice of
$\Lambda$ is clear from the context.

The multiplication by elements of $\cS_\Lambda$ is defined by the
standard isomorphism $\cF(D_{\lambda_1})\otimes\cO(D_{\lambda_2}) \iso
\cF(D_{\lambda_1+\lambda_2})$, see \cite[Sect.~II.5]{Hartshorne}.
Clearly, $\Gamma_\Lambda$ extends to a functor from category of
(quasi) coherent sheaves on $X$ to category of graded $\cS_\Lambda$
modules.  The functor $\Gamma_\Lambda$ is left exact.

For the proof of Theorem \ref{GenJaczewskiThm} we need the following
observation.

\begin{lemma}\label{Rank1CoxModule}
  In the above situation, let $D$ be a divisor on $X$, which is linearly
  equivalent to $D_{\lambda_0}$ for some $\lambda_0\in\Lambda$. Then
  $\Gamma_\Lambda(\cO(D))$ is a free $\cS_\Lambda$-module of rank $1$
  which is isomorphic to $\cS_\Lambda[\lambda_0]$, where the square
  bracket denotes the shift in grading.
\end{lemma}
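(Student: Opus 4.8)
The plan is to exhibit the isomorphism explicitly as multiplication by a single fixed rational function. Since $D$ is linearly equivalent to $D_{\lambda_0}$, I first fix $\phi\in\bC(X)^*$ with $\pd(\phi)=D-D_{\lambda_0}$. I would then realize every graded piece of both modules as a subspace of $\bC(X)$: on the one side $\H^0(X,\cO(D+D_\lambda))=\{f\in\bC(X)^*:\pd(f)+D+D_\lambda\geq 0\}\cup\{0\}$, and on the other $S^\mu=\{g\in\bC(X)^*:\pd(g)+D_\mu\geq 0\}\cup\{0\}$. The candidate map is simply $f\mapsto\phi\cdot f$, the product being taken inside the field $\bC(X)$.

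Next I would check that multiplication by $\phi$ sends the degree-$\lambda$ piece to the correct target. For $f$ in the degree-$\lambda$ component of $\Gamma_\Lambda(\cO(D))$ one computes $\pd(\phi f)+D_{\lambda+\lambda_0}=\pd(f)+\pd(\phi)+D_{\lambda_0}+D_\lambda=\pd(f)+D+D_\lambda\geq 0$, so $\phi f\in S^{\lambda+\lambda_0}$; conversely multiplication by $\phi^{-1}$ sends $S^{\lambda+\lambda_0}$ back, and the two are mutually inverse bijections on each graded component. Summing over $\lambda\in\Lambda$ yields a degree-preserving $\bC$-linear isomorphism $\Gamma_\Lambda(\cO(D))\to\cS_\Lambda[\lambda_0]$, using the convention that the degree-$\lambda$ component of $\cS_\Lambda[\lambda_0]$ is $S^{\lambda+\lambda_0}$.

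It then remains to verify compatibility with the $\cS_\Lambda$-module structures. The action of $s\in S^\mu$ on the degree-$\lambda$ piece of $\Gamma_\Lambda(\cO(D))$ is, under the standard isomorphism $\cF(D_{\lambda_1})\otimes\cO(D_{\lambda_2})\iso\cF(D_{\lambda_1+\lambda_2})$ for $\cF=\cO(D)$, nothing but the product $sf$ in $\bC(X)$; since $\phi(sf)=s(\phi f)$, the map intertwines the two actions and is therefore an isomorphism of graded $\cS_\Lambda$-modules. Because $\cS_\Lambda[\lambda_0]$ is free of rank one, generated by $1\in S^0$ sitting in degree $-\lambda_0$, the same holds for $\Gamma_\Lambda(\cO(D))$, with free generator $\phi^{-1}$ (which indeed satisfies $\pd(\phi^{-1})+D-D_{\lambda_0}=0$, so lies in the degree $-\lambda_0$ piece).

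I do not anticipate a genuine obstacle: the content is entirely formal once the graded pieces are written as subspaces of $\bC(X)$. The only points requiring care are bookkeeping --- fixing the sign convention for the grading shift $[\lambda_0]$ consistently with the displayed statement, and confirming that the $\cS_\Lambda$-action on $\Gamma_\Lambda(\cO(D))$ really is ordinary multiplication in $\bC(X)$ rather than a twisted one, which is exactly what the definition of the module multiplication via $\cF(D_{\lambda_1})\otimes\cO(D_{\lambda_2})\iso\cF(D_{\lambda_1+\lambda_2})$ guarantees.
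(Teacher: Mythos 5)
Your proof is correct. The paper in fact states Lemma \ref{Rank1CoxModule} without any proof, treating it as a routine consequence of the definitions; your argument --- realizing each graded piece inside $\bC(X)$ and taking multiplication by a fixed $\phi\in\bC(X)^*$ with $\pd(\phi)=D-D_{\lambda_0}$ as the degree-preserving isomorphism, then checking $\cS_\Lambda$-linearity and identifying the free generator $\phi^{-1}$ --- is exactly the standard verification the authors leave to the reader, and every step of it checks out.
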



\subsection{Characteristic spaces of Cox rings}%
\label{CharacteristicSpacesSection}
The theory of Cox rings, or total coordinate rings, is pretty well
understood, see for example \cite{CoxJAG}, \cite{CoxTohoku},
\cite{HuKeel}, \cite{BerchtoldHausen} or \cite{ArzhantsevHausen}. The
reader not familiar with this subject may want to consult
\cite{LafaceVelasco} for a concise review or \cite{ADHL} for an
exhaustive overview of the subject. We note that in the present paper
the starting set-up is somehow more general. That is, the rings are
associated to a choice of $\Lambda$, hence our set up is similar to
this of Zariski, \cite[Sect.~4-5]{Zariski}. Thus, we will say that the
ring $\cS_\Lambda$ is the total coordinate ring of $X$ if and only if
the lattice $\Lambda$ is naturally isomorphic with $\Cl X$, the divisor
class group of $X$.
\par\medskip
The following condition for the pair $(X,\Lambda)$ is very convenient:
\begin{assumption}\label{ampleness-finitness}
Ampleness of $\Lambda$ and finite generation of
  $\cS_\Lambda$:
\begin{enumerate}
\item $\Lambda$ contains an ample divisor,
\item the ring $\cS_\Lambda$ is a finitely generated $\bC$-algebra.
\end{enumerate}
\end{assumption}

We note that the above condition is true if either $\Lambda$ is
generated by an ample divisor or $X$ is a Mori Dream Space (for
example, a $\bQ-$factorial Fano variety, see \cite[1.3.2]{BCHM}) and
$\Lambda$ generates the Picard group of $X$.

Under the above assumptions $Y_\Lambda=\Spec\cS_\Lambda$ is a well
defined affine variety over $\bC$ with the action of the torus
$\bT_\Lambda$. Moreover, the ideal $\cS^+_\Lambda$ defines the unique
$\bT_\Lambda$ invariant closed point in $Y_\Lambda$.

If $\cS_\Lambda$ is the total coordinate ring, then $Y_\Lambda$ is
normal, \cite[Sect.~I.5]{ADHL}

The relation of $Y_\Lambda$ with $X$ can be understood via the GIT
theory, see e.g.~\cite[Ch.~2]{HuKeel}. A choice of an ample divisor
$D_a\in\Lambda$ determines a character of $\bT_\Lambda$ and thus the
set of semistable points of this action $\widehat{Y}_\Lambda\subseteq
Y_\Lambda$, such that $Y_\Lambda\setminus \widehat{Y}_\Lambda$ is of
codimension 2 at least. Moreover $X$ is a geometric quotient of $\widehat{Y}_\Lambda$ by
$\pi_\Lambda: \widehat{Y}_\Lambda\lra X$.

A slightly different view is presented in \cite[Ch.~I]{ADHL} and we
will follow this approach. As in \cite[I.6.1]{ADHL}, the variety
$\widehat{Y}_\Lambda$, now called the characteristic space associated
to $\cS_\Lambda$, is constructed as the relative spectrum
$\Spec_X\widehat{\cS}_\Lambda$ of the graded sheaf of finitely
generated $\cO_X$-algebras $\widehat{\cS}_\Lambda =
\bigoplus_{\lambda\in\Lambda} \widehat{S}^\lambda$ where
$\widehat{S}^\lambda = \cO_X(D_\lambda)$. Following \cite{ADHL},
the sheaf $\widehat{\cS}_\Lambda$ will be called the Cox sheaf of the
lattice $\Lambda$ and $\cS_\Lambda$ is the algebra of its global
sections. The evaluation of global sections yields a map $\iota:
\widehat{Y}_\Lambda\lra Y_\Lambda$ which is an embedding onto an open
subset whose complement is of codimension $\geq 2$, see
\cite[I.6.3]{ADHL}. On the other hand the inclusion
$\cO_X=\widehat{S}^0\hookrightarrow \widehat{\cS}_\Lambda$ yield the
map $\pi:\widehat{Y}_\Lambda\lra X$ which is a geometric quotient of
the $\bT_\Lambda$ action. In fact, since $\Lambda$ consists of locally
principal divisors, the map $\pi$ is a locally trivial, principal
$\bT_\Lambda$-bundle, \cite[I.3.2.7]{ADHL}.

The ideal defining the closed set $Y_\Lambda\setminus
\widehat{Y}_\Lambda$ is called the irrelevant ideal and it is the
radical of an ideal generated by sections of a very ample line bundle
on $X$, \cite[I.6.3]{ADHL}.

\subsection{Cox rings of toric varieties}\label{CoxToricSection}
The following theorem was partly established by Cox (``only if'' part,
\cite{CoxJAG}), Hu and Keel (``if'' part, smooth case
\cite[Cor.~2.9]{HuKeel}) and, eventually, by Berchtold and Hausen,
\cite[Cor.~4.4]{BerchtoldHausen} who proved it in a stronger form than
the one below. We recall that a variety $X$ satisfies W{\l}odarczyk's
$A_2$ property if any two points of $X$ are contained in common open
affine neighbourhood, see \cite{Wlodarczyk}.
\begin{theorem}\label{CoxHuKeel}
  Let $X$ be a complete normal variety which has $A_2$ property and
  has finitely generated free divisor class
  group. 
  Then $X$ is a toric variety if and only if its total coordinate ring
  is a polynomial ring.
\end{theorem}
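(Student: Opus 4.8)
The plan is to prove the two implications separately, the substance lying in the ``if'' direction, which I would handle by realizing $X$ as a torus quotient of an open subset of affine space. Throughout write $K=\Cl X$, which by hypothesis is free of finite rank, and identify $\Lambda\iso K$ so that $\cS_\Lambda=\bigoplus_{\lambda\in K}\H^0(X,\cO(D_\lambda))$ is the total coordinate ring.

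For the ``only if'' direction I would follow Cox \cite{CoxJAG}. Given a toric $X$ with fan $\Sigma$, let $D_1,\dots,D_d$ be the invariant prime divisors attached to the rays in $\Sigma(1)$. Their classes generate $K$ and fit into the exact sequence $0\ra M\ra\bZ^d\xrightarrow{Q}K\ra 0$, where $M$ is the character lattice and $Q(e_i)=[D_i]$. The combinatorial description of global sections, $\H^0(X,\cO(D))=\bigoplus_m \bC\cdot\chi^m$ with $m$ ranging over the lattice points of the polytope of $D$, then matches each monomial $\chi^m$ with a monomial in the $x_i$; checking that this identification respects the multiplication induced from $\bC(X)$ that defines $\cS_\Lambda$ identifies $\cS_\Lambda$ with the polynomial ring $\bC[x_1,\dots,x_d]$ graded by $K$ via $Q$, with $\deg x_i=[D_i]$.

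For the ``if'' direction, suppose $\cS_\Lambda\iso\bC[x_1,\dots,x_d]$ as a $K$-graded ring, with $\deg x_i=w_i$. Then $Y_\Lambda=\Spec\cS_\Lambda=\bA^d$ is an affine toric variety for its big diagonal torus $T_d=(\bC^*)^d$, and the $K$-grading is exactly the action of $\bT_\Lambda=\Hom(K,\bC^*)$ coming from the surjection $Q\colon\bZ^d\ra K$, $e_i\mapsto w_i$ (surjective since the nonzero graded pieces generate $K$). Dualizing gives $1\ra\bT_\Lambda\ra T_d\ra T_M\ra 1$ with $T_M=\Hom(M,\bC^*)$, $M=\ker Q$. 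By the characteristic space machinery recalled above, $\widehat{Y}_\Lambda\subseteq\bA^d$ is an open $\bT_\Lambda$-invariant subset whose complement has codimension $\geq 2$, and $\pi_\Lambda\colon\widehat{Y}_\Lambda\ra X$ is a geometric quotient, in fact a principal $\bT_\Lambda$-bundle. The crucial point is that $\widehat{Y}_\Lambda$ is invariant not only under $\bT_\Lambda$ but under all of $T_d$: its complement is cut out by the irrelevant ideal, a radical $K$-homogeneous ideal, and since $\bT_\Lambda\subseteq T_d$ acts diagonally on $\bA^d$, every $K$-homogeneous element of $\bC[x_1,\dots,x_d]$ is a sum of monomials of a fixed $\bT_\Lambda$-weight; hence each graded piece, and therefore the irrelevant ideal, is spanned by monomials, so its zero set is a union of coordinate subspaces. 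Consequently $T_M=T_d/\bT_\Lambda$ acts on $X=\widehat{Y}_\Lambda/\bT_\Lambda$, the image of the open orbit $(\bC^*)^d$ is a dense $T_M$-orbit, and $\dim T_M=d-\rk K=\dim X$; as $X$ is normal, a variety with a dense torus orbit is toric, the divisors $\pi_\Lambda(\{x_i=0\})$ being the invariant prime divisors, matching the rays as in Theorem \ref{Jaczewski} and \cite{CoxJAG}.

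I expect the monomiality step to be the main obstacle, since it is the one place where polynomiality of $\cS_\Lambda$ (as opposed to mere finite generation) is essential; for a general finitely generated Cox ring the irrelevant ideal need not be monomial and the conclusion genuinely fails. A second delicate point is that for a complete but possibly non-projective $A_2$ variety there is no ample class, so the GIT description of $\widehat{Y}_\Lambda$ must be replaced by the intrinsic construction of \cite{ADHL}, and it is precisely the $A_2$ hypothesis that guarantees the quotient $\widehat{Y}_\Lambda/\bT_\Lambda$ is a separated, hence genuinely toric, variety. The general and in fact stronger statement is due to Berchtold and Hausen \cite{BerchtoldHausen}, with the smooth case established earlier by Hu and Keel \cite{HuKeel}.
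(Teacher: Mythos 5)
Your ``only if'' direction is the standard argument of Cox and is fine as a sketch. The ``if'' direction, however, has a genuine gap at exactly the step you yourself identify as crucial: the claim that the irrelevant ideal is a monomial ideal because it is $K$-homogeneous. Homogeneity with respect to the $\bT_\Lambda$-grading does not imply monomiality: the graded pieces of the \emph{ring} $\bC[x_1,\dots,x_d]$ are spanned by monomials (once the $x_i$ are homogeneous), but a graded piece $I\cap S^\lambda$ of a homogeneous \emph{ideal} is just a linear subspace of $S^\lambda$ and need not contain a single monomial. Concretely, take the Cox ring of $\bP^2$, that is $\bC[x_1,x_2,x_3]$ with all three weights equal to $1$: the ideal $I=(x_1+x_2,\,x_1+x_3)$ is radical, $K$-homogeneous and of height $2$, but not monomial, and $U=\bA^3\setminus V(I)$ is a $\bT_\Lambda$-invariant open set whose complement has codimension $2$ and which admits a geometric quotient $U\ra U/\bC^*$, namely $\bP^2$ minus a point. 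Every formal property your argument uses holds for $U$, yet $U$ is not invariant under the big torus $T_3$ and its ``irrelevant ideal'' is not monomial. What fails for $U$ is only that its quotient is not complete; hence any correct proof of big-torus invariance of $\widehat{Y}_\Lambda$ must use the completeness of $X$, which your argument never invokes. This invariance is precisely the content of \'Swi\k{e}cicka's theorem \cite[2.4]{Swiecicka} on quotients of toric varieties by subtori (alternatively, of the chamber-type arguments in \cite{HuKeel} and \cite{BerchtoldHausen}); it is a substantive theorem and cannot be replaced by the purely algebraic reasoning you give.

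A secondary gap: you assume from the outset that $\cS_\Lambda\iso\bC[x_1,\dots,x_d]$ as a \emph{graded} ring with each $x_i$ homogeneous, whereas the hypothesis only gives an abstract isomorphism of $\bC$-algebras. This is fillable --- either by the linearization theorem \cite[5.1]{KraftPopov}, or elementarily: since $X$ is complete the grading is pointed with $S^0=\bC$, so homogeneous lifts of a basis of $\cS_\Lambda^+/(\cS_\Lambda^+)^2$ generate $\cS_\Lambda$ (Lemma \ref{littleHilbert}) and are algebraically independent by a dimension count --- but it should be said. For comparison: the paper does not prove Theorem \ref{CoxHuKeel} at all (it is quoted from \cite{CoxJAG}, \cite{HuKeel}, \cite{BerchtoldHausen}); what it proves is the projective variant, Theorem \ref{CoxHuKeel2}, and there the two steps above are exactly the ones delegated to \cite[5.1]{KraftPopov} and \cite[2.4]{Swiecicka}, the rest of that proof being devoted to the sharper conclusion $\Lambda=\Cl X$. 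So your overall plan --- realize $X$ as the quotient of a torus-invariant open subset of $\bA^d$ and descend the big torus action --- is the right one and matches the literature; what is missing is the actual proof of the one assertion that carries the whole weight.
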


We will need a slight variation of the above result.
\begin{theorem}\label{CoxHuKeel2}
  Let $X$ be a projective normal variety with with a lattice
  $\Lambda\subset\Pic X$ containing a class of an ample divisor. If
  $\cS_\Lambda$ is a polynomial ring, then $X$ is a smooth toric
  variety and $\Lambda=\Pic X = \Cl X$.
\end{theorem}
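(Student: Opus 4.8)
The plan is to recognize $\cS_\Lambda$ as the total coordinate ring of $X$ and then invoke Theorem \ref{CoxHuKeel}, extracting smoothness as a byproduct of the characteristic space construction. Since a polynomial ring is finitely generated, the hypotheses of Assumption \ref{ampleness-finitness} are met: $\Lambda$ contains an ample class by assumption and $\cS_\Lambda$ is finitely generated. Hence $Y_\Lambda=\Spec\cS_\Lambda=\bA^N$ is affine space (here $N$ is the number of variables), carrying its linear $\bT_\Lambda$-action, and the characteristic space $\pi\colon\widehat Y_\Lambda\lra X$ of Section \ref{CharacteristicSpacesSection} is defined, with $\widehat Y_\Lambda\subseteq\bA^N$ an open subset whose complement has codimension at least two and with $\pi$ a locally trivial principal $\bT_\Lambda$-bundle.

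First I would establish smoothness. As an open subset of $\bA^N$ the total space $\widehat Y_\Lambda$ is regular, and $\pi$, being a principal bundle under the smooth group $\bT_\Lambda$, is a smooth surjective morphism; since regularity descends along smooth surjections, $X$ is smooth. Next I would compute the class group. Because $\widehat Y_\Lambda$ is obtained from $\bA^N$ by deleting a closed set of codimension $\geq 2$, and the divisor class group of a normal variety is insensitive to such deletions, we get $\Cl\widehat Y_\Lambda=\Cl\bA^N=0$. The torsor $\pi$ then yields the exact sequence
\[
0\lra\Lambda\lra\Cl X\stackrel{\pi^*}{\lra}\Cl\widehat Y_\Lambda\lra 0,
\]
in which the left-hand map sends a character $\lambda$ to the class $[D_\lambda]$ of its associated line bundle, that is, it is the given inclusion $\Lambda\subseteq\Pic X\subseteq\Cl X$; see \cite[Ch.~I]{ADHL}. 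As $\Cl\widehat Y_\Lambda=0$, this inclusion is surjective, so $\Lambda=\Cl X$, and the sandwich $\Lambda\subseteq\Pic X\subseteq\Cl X=\Lambda$ forces $\Lambda=\Pic X=\Cl X$.

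It remains to conclude that $X$ is toric. Since $\Lambda=\Cl X$, the ring $\cS_\Lambda$ is by definition the total coordinate ring of $X$, and it is a polynomial ring by hypothesis. Moreover $X$ is projective, hence it has W{\l}odarczyk's $A_2$ property, and $\Cl X=\Lambda$ is free of finite rank. All hypotheses of Theorem \ref{CoxHuKeel} are thus satisfied, so $X$ is a toric variety; together with the smoothness established above this proves the statement. The main obstacle in this argument is the identification $\Lambda=\Cl X$: it rests on the vanishing $\Cl\widehat Y_\Lambda=0$ together with the exact sequence relating the characters of $\bT_\Lambda$ to the class group through the torsor $\pi$, which is precisely where the codimension-two property of the complement $Y_\Lambda\setminus\widehat Y_\Lambda$ is essential.
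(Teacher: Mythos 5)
Your proof is correct, and it takes a genuinely different route from the one in the paper. The paper follows Hu--Keel \cite[2.10]{HuKeel}: it first proves toricness by linearizing the $\bT_\Lambda$-action on $Y_\Lambda=\bA^{n+r}$ via the Kraft--Popov theorem \cite[5.1]{KraftPopov} and applying \'Swi\k{e}cicka's result \cite[2.4]{Swiecicka} on torus quotients of toric varieties; it then proves $\Lambda=\Cl X$ by comparing the rays and lattices of the fans of $\widehat Y_\Lambda$ and $X$ (diagram \eqref{Lattices}); and it deduces smoothness last, from $\Pic X=\Cl X$ on a toric variety \cite[4.2.6]{CoxLittleSchenck}. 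You reverse the order and avoid the equivariant and toric machinery altogether: smoothness comes from descent of regularity along the faithfully flat morphism $\pi$, the identification $\Lambda=\Cl X$ comes from $\Cl\widehat Y_\Lambda=\Cl\bA^N=0$ (this is exactly where the codimension-two complement is used) together with the exact sequence $0\to\Lambda\to\Cl X\to\Cl\widehat Y_\Lambda$ attached to the torsor $\pi$, and toricness is then outsourced to Theorem \ref{CoxHuKeel}, whose hypotheses ($A_2$ from projectivity, $\Cl X=\Lambda$ free of finite rank, total coordinate ring polynomial) are now all in place. What your route buys is brevity and modularity: Theorem \ref{CoxHuKeel} carries all the toric content, and no linearization theorem is needed. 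What the paper's route buys is independence from Theorem \ref{CoxHuKeel} --- in effect it reproves that theorem in this setting --- plus an explicit description of the resulting toric structure (the bijection of rays). The one place you should tighten is the citation for the exact sequence: exactness at $\Cl X$ (the kernel of $\pi^*$ equals the image of $\Lambda$) is the standard statement for a Zariski-locally trivial torus torsor, proved via Rosenlicht's lemma (a rational function on $\widehat Y_\Lambda$ with $\bT_\Lambda$-invariant divisor is a semiinvariant, hence a rational section of some $\cO_X(D_\lambda)$, so its divisor is $\pi^*(\pd(g)+D_\lambda)$); results of this type are in \cite{ADHL} and \cite{KKV}, but ``\cite[Ch.~I]{ADHL}'' alone is a loose pointer, so either pin down the reference or include this one-line argument.
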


\begin{proof}
  The proof of the first part, that is of toricness of $X$, goes along
  the lines of the proof of \cite[2.10]{HuKeel}. The references
  regarding the group action were suggested to us by J\"urgen Hausen.

  As usually, by $n$ we denote the dimension of $X$ and by $r$ the
  rank of the lattice $\Lambda$. Thus, in the situation of the
  theorem, $\cS_\Lambda$ is the polynomial ring in $n+r$ variables.
  By the linearization theorem \cite[5.1]{KraftPopov} the action of
  $\bT_\Lambda$ on $Y=\Spec\cS_\Lambda=\bA^{r+n}$ is linear. By
  \cite[2.4]{Swiecicka} $\widehat{Y}$ is a toric variety and the
  quotient of $\widehat{Y}$ by the torus action, which is $X$, is a
  toric variety again. The morphism $\widehat{Y}\ra X$ is surjective
  and toric, therefore the set the rays in the fan defining
  $\widehat{Y}$ maps surjectively onto the set of rays in the fan of
  $X$. Thus, the rank of $\Cl X$, which is equal to the number of rays
  in the fan of $X$ minus its dimension, is equal $r$ at most. On the
  other hand, since $\Lambda\subset\Pic X\subset\Cl X$, it follows
  that $r=\rank\Lambda=\rank\Cl X$ and the maps of fans of
  $\widehat{Y}$ and $X$ induces bijection on the set of their rays.

  By fixing a basis in $\Cl(X)$ we can see the Cox ring of $\Lambda$
  as a subring of the total coordinate ring of $X$; both rings are
  polynomial rings in $n+r$ variables. The inclusion of rings can be
  explained in terms of their lattices of Laurent monomials in the following
  way.

  Let $\widehat{M} =\bigoplus_{i=1}^{r+n} \bZ e_i^*$ denote the
  lattice of characters of the natural torus in $Y$, where $e_i^*$'s
  are dual to the primitive generators of the rays of $\widehat{Y}$
  and let $M$ denote the lattice of characters of the quotient torus
  acting on $X$. Let $\pi:\widehat{M}^\vee \ra M^\vee$ be the morphism
  of lattices, corresponding to the toric morphism $\widehat{Y}\ra X,$
  where $\pi(e_i)=a_i v_i$ for a positive integer $a_i$ and the
  primite vector $v_i\in M^\vee$, spanning a ray of the fan of
  $X$. The dual of the map $\pi$ fits into an upper exact sequence in
  the diagram \ref{Lattices} below.

  The vertical arrows in this diagram come from the inclusion of
  $\cS_\Lambda$ in the total ring of $X$. Namely, given an element in
  $\cS_\Lambda$ which is a monomial in the polynomial representation
  of this ring, the associated effective divisor in the total
  coordinate ring of $X$ will be $\bT_M$-invariant. This follows
  because the linearization of the action of $\bT_M$ on the graded
  pieces of $\cS_\Lambda$ agrees with the inclusion $M\hookrightarrow
  \widehat{M}$. Therefore we get a map $\iota$ in the center of the
  following commutative diagram in which $D_i$' denote $\bT_M$
  invariant divisors, each $D_i$ associated to the ray spanned by
  $v_i$.
  \begin{eq}\label{Lattices}
    \xymatrix{ 0\ar[r] & M\ar[r]^{\pi^*}\ar@{=}[d] &
      \widehat{{\phantom{|}}M{\phantom{|}}}\ar[r]\ar@{^{(}->}@<-2pt>[d]^{\iota}
      &
      {\phantom{|}}\Lambda{\phantom{|}}\ar[r]\ar@{^{(}->}@<-2pt>[d(0.8)] & 0 \\
      0\ar[r] & M\ar[r] & \bigoplus_{i=1}^{r+n}\bZ D_i\ar[r] &
      \Cl(X)\ar[r] & 0 }
  \end{eq}
  Thus the lattice $\widehat{M}$ is of finite index in the lattice
  $\bigoplus_{i=1}^{r+n}\bZ D_i$ and, in addition, over $\bQ$ the
  simplicial cones of effective divisors in $\widehat{M}$ and
  $\bigoplus_{i=1}^{r+n}\bZ D_i$ are equal. That is, $\iota
  (e_i^*)=b_{\sigma(i)} D_{\sigma(i)},$ where $\sigma$ is some
  permutation of the set $\{1,\ldots,r+n\}$ and $b_i$'s are positive
  integers. By the commutativity of the left hand side of
  \eqref{Lattices} we get that $\iota(\sum a_i \langle m, v_i\rangle
  e_i^*)= \sum\langle m, v_i\rangle D_i$, for every $m\in
  M$. Consequently we get that $a_j b_{\sigma(j)}v_j=v_{\sigma(j)}$,
  which implies $\sigma=id$ and $a_i=b_i=1$ because the rays are
  different and $a_i, b_i$ are positive integers. Hence $\Lambda=\Cl
  X$, equivalently $\cS_\Lambda$ is the total coordinate ring of $X$.

  Alternatively, the latter statement follows from much
  more general results \cite[6.4.3, 6.4.4]{ADHL}. Thus $\Pic X=\Cl X$
  which yields that $X$ is smooth, see \cite[4.2.6]{CoxLittleSchenck}.
\end{proof}

\par\bigskip
Although Theorems \ref{Jaczewski} and \ref{CoxHuKeel} give
characterisation of toricness in terms of apparently different objects
they turn out to be closely related. We will discuss this issue in the
second part of the paper.


\subsection{Differential forms and splitting of tangent bundle}%
\label{DiffFormsSection}
We will use the standard definition of the module and the sheaf of
K\"ahler $\bC$ differentials, as in \cite[Sect.~26]{Matsumura}, which
we will denote by $\Omega_A$ and $\Omega_X$, respectively. The double
dual, $\Omega^{\vee\vee}_A$ or $\Omega^{\vee\vee}_X$ will be called,
respectively, the module, or the sheaf of Zariski or reflexive
differentials, see e.g.~\cite[Sect.~1]{Knighten}. We note that, being
reflexive, over normal varieties the sheaf of Zariski differentials
has extension property which means that all its sections are
determined uniquely on complements of codimension $\geq 2$ sets,
\cite{Hartshorne2}. That is, if $Y$ is normal and
$\iota:\widehat{Y}\hookrightarrow Y$ is an open subset such that
$Y\setminus\widehat{Y}$ is of codimension $\geq 2$ then
$\iota_*\Omega^{\vee\vee}_{\widehat{Y}}=\Omega^{\vee\vee}_Y$.

\medskip

Let us note that the theorem of Jaczewski or, more generally, any
theorem about splitting of the sheaf $\cR_\Lambda$, see
\ref{GenJaczewskiThm}, can be considered in the context of splitting
of the sheaf of K\"ahler differentials, $\Omega_X$ (just take
$\Lambda=0$). This question for complex K\"ahler manifolds has been
considered by several authors: \cite{Beauville}, \cite{Druel},
\cite{CampanaPeternell}, \cite{BPT} and \cite{Horing1},
\cite{Horing2}. For non-K\"ahler manifolds, a Hopf manifold of
dimension $\geq 2$, provides an example of a complex manifold whose
tangent bundle (the dual of the sheaf of K\"ahler differentials)
splits into a sum of line bundles, see \cite[Ex.~2.1]{Beauville}.


\section{Part II, results}

\subsection{The generalized Euler extension}\label{GenEulerSection}

In the present section we want to describe the generalized Euler
sequence \eqref{EulerSequenceGen} in terms of \v{C}ech data. We fix an
affine covering $\cU=(U_i)_{i\in I}$ of $X$ in which all divisors in
$\Lambda$ can be represented as principal divisors.  That is, for
every $D_\lambda\in\Lambda\subset \CDiv(X)$ and $i\in I$ we can choose
a rational function $f_i^\lambda\in\bC(X)^*$ such that
$D_{\lambda|U_i}= \pd(f_i^\lambda)$. We define also $g_{ij}^\lambda =
f_j^\lambda/f_i^\lambda \in \cO^*_X(U_i\cap U_j)$. The first Chern
class $c_1(\cO(D))$ is represented (up to the multiplicative constant)
in the covering $\cU$ by the {\v C}ech cocycle $(d\log g_{ij}^\lambda)
\in \cZ^1(\cU,\Omega_X)$. The choice of $f_i$'s and hence of
$g_{ij}$'s is unique up to functions from $\cO^*_X(U_i)$.

  Thus, in order to avoid ambiguity we fix a basis of the lattice
  $\Lambda$ and $f_i$'s for the basis elements. This determines the
  choice $f_i^\lambda$ for every $D_\lambda\in\Lambda$ so that we have
  a group homomorphism of $\Lambda$ into \v Cech 0-cochains of
  rational functions
  \begin{eq}\label{HomLatticeToChain}
    \Lambda\ni\lambda \mapsto (f_i^\lambda) \in \cC^0(\cU,\bC(X)^*)
  \end{eq}%
  As the result, we get a homomorphism into \v Cech cocycles
  \begin{eq}\label{HomLatticeToCycle}
    \Lambda\ni\lambda \mapsto (g_{ij}^\lambda) \in \cZ^1(\cU,\cO^*_X)
  \end{eq}

  The map $c_1: \Lambda\lra \H^1(X,\Omega_X)$ is represented in the
  covering $\cU$ by $\psi_{ij} \in \Hom(\Lambda,\Omega_X(U_i\cap
  U_j))$ such that
  $$\psi_{ij}(\lambda)=\psi_{ij}(D_\lambda)=
  d\log g_{ij}^\lambda=d\log f_j^\lambda-d\log f_i^\lambda$$

  Sections of the sheaf $\cR_\Lambda$ over an open $U\subset X$ come
  by glueing $(\omega_i,\lambda_i) \in \Omega_X(U_i\cap U) \oplus
  \Lambda_X(U_i\cap U)$, with $(\omega_j,\lambda_j)\in\Omega_X(U_j\cap
  U)\oplus\Lambda_X(U_j\cap U)$ and
  $\omega_j=\omega_i+\widetilde{\psi_{ij}}(\lambda_i)$, where the
  restriction of $\omega$'s and $\lambda$'s to $U_i\cap U_j\cap U$ is
  denoted by the same letter, c.f.~\cite[Sect.~4]{Atiyah}, and
  $\widetilde{\psi_{ij}}$ is an extension of $\psi_{ij}$ to a
  homomorphism $\Lambda_X(U\cap U_i\cap U_j)\ra \Omega_X(U\cap U_i\cap
  U_j)$.


\subsection{Differentials of the Cox sheaf}\label{DiffCoxSection}
  Now, for a nonzero rational function $h\in S^\lambda=
  \H^0(X,\cO_X(D_\lambda))$ we consider its local regular
  presentation in covering $\cU$ given by
  \begin{eq}\label{CyclePresentation}
    s_i=hf^\lambda_i \in \cO_X(U_i)
  \end{eq}%
  We will write $s:=(s_i)_{i\in I}$. We claim that the pairs
  $(ds_i,s_i\cdot D_\lambda)\in \Omega_X(U_i)\oplus \Lambda_X(U_i)$
  determine a section of $\cR_\Lambda(D_\lambda)=\cR_\Lambda\otimes
  \cO(D_\Lambda)$ which we call $ds$. Here we use the convention that
  $(ds_i,s_i\cdot D_\lambda)$ is identified with $(ds_i\otimes 1,
  D_\lambda\otimes s_i)$ as a section of $\Omega_X(D_\lambda)\oplus
  \Lambda_X(D_\lambda)$ over $U_i$.  Indeed, since over the set
  $U_i\cap U_j$ we have $s_j=g_{ij}^\lambda s_i$, it follows that
  \begin{eq}\label{LeibnizIdentity}
    ds_j=g_{ij}^\lambda\cdot ds_i + s_i\cdot d g_{ij}^\lambda =
    g_{ij}^\lambda \cdot (ds_i + s_i\cdot d \log g_{ij}^\lambda )=
    g_{ij}^\lambda \cdot (ds_i + s_i\cdot\psi_{ij}(D_\lambda)
    ) \end{eq}
  hence we get the statement. The map $s\mapsto ds$ is $\bC$ linear
  and it satisfies the Leibniz rule. Indeed, for $s\in S^\lambda$ and
  $s'\in S^{\lambda'}$, we verify that
  $$d(s\cdot s')=(d(s_i\cdot s_i',s_i\cdot s_i'\cdot
  (D_\lambda+D_{\lambda'}))= s_i(ds_i',s_i'\cdot D_{\lambda'}) +
  s_i'(ds_i,s_i\cdot D_{\lambda})=s\cdot ds'+s'\cdot ds$$ Thus the map
  \begin{eq}\label{Derivation}
    \cS_\Lambda\supset S^\lambda=\H^0(X,\cO_X(D_\lambda))\ni s\lra
    ds=(ds_i,s_i\cdot D_\lambda)\in
    \H^0(X,\cR_\Lambda(D_\lambda))\subset \Gamma(\cR_\Lambda)
  \end{eq}%
  is a $\bC$-derivation of the Cox ring $\cS_\Lambda$.

  It is clear that the above construction of the map $s\ra ds$ works
  also locally, for sections over an arbitrary open subset of $X$,
  hence it gives a map $d: \cO_X(D_\lambda)\ra
  \cR(D_\lambda)$. Actually, the $\bC$-linear map $d$ can be described
  in relation to the jet bundles, which were presented in
  Section \ref{JetSection}. The first Chern class of $D_\lambda$ gives
   the following
  diagram with exact rows of left-to-right homomorphisms of sheaves of
  $\cO_X$-modules
  \begin{eq}\label{JetCoxDiagram}
    \xymatrix{
    0\ar[r]&\Omega_X(D_\lambda)\ar[r]\ar@{=}[d]&\cR_{D_\lambda}(D_\lambda)
    \ar[r]\ar[d]&
    \cO_X(D_\lambda)\ar@/_/[l]\ar@/_/[dl]^d\ar[r]
    \ar[d]^{c_1(D_\lambda)\otimes id}
    &0\\
    0\ar[r]&\Omega_X(D_\lambda)\ar[r]&\cR_\Lambda(D_\lambda)\ar[r]&
    \Lambda\otimes\cO_X(D_\lambda)\ar[r]&0 }\end{eq}
  As discussed in Section \ref{JetSection} the upper row of this
  sequence splits as a sequence of $\bC$-modules and the map $d$ is
  the composition of the splitting map with the map of extensions.

  The map $d$ is a $\bC$-derivation of the sheaf
  $\widehat{S}_\Lambda$. That is, for every open $U\subseteq X$, any
  section $s\in \widehat{S}^\lambda(U)$ yields
  $ds\in(\cR_\Lambda\otimes\widehat{S}^\lambda)(U)$ and the map
  $s\mapsto ds$ is a derivation. Verification of the Leibniz identity
  is the same as in \eqref{LeibnizIdentity}. Thus, we get the
  homomorphism of $\Lambda$-graded quasi-coherent sheaves of
  $\cO_X$-modules $\widehat\varphi: \pi_*\Omega_{\widehat
    Y}\lra\cR_\Lambda\otimes_{\cO_X}\widehat{\cS}_\Lambda$.

\begin{lemma}\label{DiagramLemma} Suppose that we are in the set-up
  of section \ref{CharacteristicSpacesSection} so that, in particular,
  the pair $(X,\Lambda)$ satisfies the assumption
  \ref{ampleness-finitness} and the characteristic space
  $\widehat{Y}_\Lambda$ with the projection
  $\pi:\widehat{Y}_\Lambda\ra X$ is well defined. Then the map
  $\widehat\varphi$ fits into the following commutative diagram of
  $\Lambda$ graded sheaves of $\cO_X$-modules whose rows are exact and
  vertical arrows are isomorphisms:
  \begin{eq}\label{DiffCoxPfDiagram}
  \xymatrix{ 0\ar[r]&\pi_*\pi^*\Omega_X\ar[r]\ar[d]&
    \pi_*\Omega_{{\widehat Y}_\Lambda}\ar[r]\ar[d]^{\widehat\varphi}&
    \pi_*\Omega_{{\widehat Y}_\Lambda/X}\ar[r]\ar[d]& 0\\
    0\ar[r]&\Omega_X \otimes_{\cO_X}{\widehat\cS}_\Lambda \ar[r]&
    \cR_\Lambda\otimes_{\cO_X}{\widehat\cS}_\Lambda\ar[r]&
    \Lambda_X\otimes_{\cO_X}{\widehat\cS}_\Lambda\ar[r]&0 }
  \end{eq}%
  The lower row of the above diagram comes by extending the
  coefficients in \eqref{EulerSequenceGen} while the upper sequence is
  the relative cotangent sequence for $\pi$.
\end{lemma}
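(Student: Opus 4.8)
The plan is to treat the two rows separately, to identify the two outer vertical arrows as isomorphisms by base-change considerations for the smooth affine quotient $\pi$, to check commutativity of the two squares by a local computation in a trivialising chart, and finally to invoke the short five lemma to conclude that the middle map $\widehat\varphi$ (already constructed above the lemma) is an isomorphism. The content is concentrated in the commutativity step; exactness and the outer identifications are essentially formal.

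First I would establish exactness of the rows. The lower row is obtained from the generalized Euler sequence \eqref{EulerSequenceGen} with $V=\Lambda_\bC$ by applying $-\otimes_{\cO_X}\widehat{\cS}_\Lambda$; since $\widehat{\cS}_\Lambda=\bigoplus_{\lambda\in\Lambda}\cO_X(D_\lambda)$ is a direct sum of invertible sheaves, hence flat over $\cO_X$, this functor is exact and the lower row is short exact. For the upper row, recall from Section \ref{CharacteristicSpacesSection} that $\pi:\widehat{Y}_\Lambda\to X$ is a locally trivial principal $\bT_\Lambda$-bundle, so $\pi$ is smooth; consequently the relative cotangent sequence $0\to\pi^*\Omega_X\to\Omega_{\widehat{Y}_\Lambda}\to\Omega_{\widehat{Y}_\Lambda/X}\to0$ is short exact (indeed locally split), regardless of possible singularities of $X$. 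Because $\pi$ is affine, being the relative spectrum $\Spec_X\widehat{\cS}_\Lambda$, the functor $\pi_*$ is exact on quasi-coherent sheaves, so the upper row remains short exact after pushing forward.

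Next I would identify the outer vertical arrows. For the left one, the projection formula for the affine morphism $\pi$ gives a canonical isomorphism $\pi_*\pi^*\Omega_X\iso\Omega_X\otimes_{\cO_X}\pi_*\cO_{\widehat{Y}_\Lambda}=\Omega_X\otimes_{\cO_X}\widehat{\cS}_\Lambda$, valid for the quasi-coherent (possibly non-locally-free) sheaf $\Omega_X$. For the right one, I would use that the relative cotangent sheaf of a principal $\bT_\Lambda$-bundle is trivialised by translation-invariant differentials: the invariant cotangent space of $\bT_\Lambda$, whose character lattice is $\Lambda$, is canonically $\Lambda_\bC$ via $\lambda\mapsto d\log\chi^\lambda$, where $\chi^\lambda$ is the character of $\bT_\Lambda$ attached to $\lambda$. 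Hence $\Omega_{\widehat{Y}_\Lambda/X}\iso\pi^*\Lambda_X$, and the projection formula again yields $\pi_*\Omega_{\widehat{Y}_\Lambda/X}\iso\Lambda_X\otimes_{\cO_X}\widehat{\cS}_\Lambda$.

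The heart of the argument, and the step I expect to be the main obstacle, is verifying that the two squares commute, that is, that the geometrically defined $\widehat\varphi$ agrees with these identifications. Over a chart $U_i$ of the trivialising cover $\cU$ one has $\pi^{-1}(U_i)\iso U_i\times\bT_\Lambda$, and a local graded section $s\in\widehat{S}^\lambda(U_i)=\cO_X(D_\lambda)(U_i)$ with regular presentation $s_i$ becomes the function $s_i\cdot\chi^\lambda$ on $\pi^{-1}(U_i)$. The de Rham differential then computes as $d(s_i\chi^\lambda)=\chi^\lambda\bigl(ds_i+s_i\,d\log\chi^\lambda\bigr)$, which under the identifications of the previous paragraph is exactly the pair $(ds_i,\,s_i\cdot D_\lambda)$ defining $\widehat\varphi$ in \eqref{Derivation}, the factor $\chi^\lambda$ accounting for the twist by $\cO(D_\lambda)$. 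Since the two trivialisations of $\cO_X(D_\lambda)$ over $U_i\cap U_j$ differ by $g_{ij}^\lambda$, one has $d\log\chi^\lambda_{(i)}-d\log\chi^\lambda_{(j)}=d\log g_{ij}^\lambda=\psi_{ij}(D_\lambda)$; this is precisely the gluing recorded in \eqref{LeibnizIdentity}, and it simultaneously shows that $d\log\chi^\lambda$ maps to $\lambda\in\Lambda_\bC$ in the right-hand quotient and that the horizontal part $ds_i$ is compatible with the left isomorphism. Thus both squares commute. Granting commutativity, with the two outer vertical arrows isomorphisms and the rows short exact, the short five lemma forces $\widehat\varphi$ to be an isomorphism as well.
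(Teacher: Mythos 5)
Your proof is correct, and its computational core --- trivialise over $U_i$, write a graded local section as $s_i\chi^\lambda$, and compute $d(s_i\chi^\lambda)=\chi^\lambda\bigl(ds_i+s_i\,d\log\chi^\lambda\bigr)$ --- is the same computation the paper relies on; but the logical packaging is genuinely different. The paper works entirely locally: over a trivialising chart it identifies $\widehat{\cS}_\Lambda(U_i)$ with $\cO_X(U_i)\otimes_\bC\bC[\Lambda]$, invokes the tensor-product formula for K\"ahler differentials \cite[16.5]{Eisenbud} to split the upper row over $U_i$, matches this with the evident splitting of the lower row via $\Omega_{\bC[\Lambda]}\iso\bC[\Lambda]\otimes_\bZ\Lambda$, observes that under these identifications the horizontal arrows of the two rows literally coincide, and concludes that the vertical isomorphisms so defined glue globally. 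You instead build the two outer vertical arrows globally and canonically --- the projection formula for the affine morphism $\pi$ on the left, and the trivialisation of $\Omega_{\widehat Y_\Lambda/X}$ by the invariant relative log-differentials $d\log\chi^\lambda$ on the right --- reserve the chart computation solely for checking commutativity of the two squares, and then obtain the middle isomorphism from the short five lemma, which the paper never invokes. Each route has its advantages: yours handles the gluing issue more robustly, since the outer isomorphisms are canonical and trivialisation-independence is checked exactly once, via your identity $d\log\chi^\lambda_{(i)}-d\log\chi^\lambda_{(j)}=d\log g_{ij}^\lambda=\psi_{ij}(D_\lambda)$, and it makes explicit two points the paper leaves implicit (exactness of the lower row because $\widehat{\cS}_\Lambda$ is a direct sum of invertible sheaves, hence flat; exactness of the upper row because $\pi$ is smooth and $\pi_*$ is exact for the affine morphism $\pi$). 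The paper's route, on the other hand, produces explicit simultaneous local splittings of both rows, which is what makes the passage to the $\bT_\Lambda$-invariant part in Corollary~\ref{GenEulerT-inv} immediate.
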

\begin{proof}
  Let us recall that $\pi:\widehat Y\lra X$ is a locally trivial
  principal $\bT_\Lambda$-bundle. That is, for $U_i\subseteq X$
  trivializing all divisors in $\Lambda$ there is an identification of
  $\cO_X(U_i)$-modules $\widehat{S}^\lambda(U_i) = (f_i^\lambda)^{-1}
  \cdot \cO_X(U_i)$, as submodules of $\bC(X)(U_i)$. Thus, having in
  mind homomorphism \eqref{HomLatticeToChain}, we get isomorphism of
  $\cO_X(U_i)$-algebras $\widehat{\cS}_\Lambda(U_i) \iso
  \cO_X(U_i)[\Lambda]=\cO_X(U_i)\otimes\bC[\Lambda]$. Therefore, by
  the formula for tensor products \cite[16.5]{Eisenbud}, we get the
  canonical isomorphism
  $$\Omega_{\widehat{\cS}_\Lambda(U_i)}=
  \Omega_X(U_i)\otimes_{\cO_X(U_i)}\widehat{\cS}_\Lambda(U_i)\ \oplus\
  \Omega_{\bC[\Lambda]}\otimes_{\bC[\Lambda]}
  \widehat{\cS}_\Lambda(U_i)$$ which splits the upper row over
  $U_i$. This splitting coincides with the splitting of the lower row
  over $U_i$ once we note the standard isomorphism
  $\Omega_{\bC[\Lambda]} = \bC[\Lambda] \otimes_\bZ\Lambda$, which
  sends the derivative of the monomial $t^\lambda\in\bC[\Lambda]$ to
  $t^\lambda\otimes \lambda$, see
  \cite[Ch.~8]{CoxLittleSchenck}. Moreover, under this identification
  the arrows in the upper row are identical with those in the lower
  row, c.f.~\eqref{CyclePresentation}. Thus the diagram commutes and
  all vertical arrows are now defined globally which concludes the
  proof of Lemma \ref{DiagramLemma}.
\end{proof}

The above lemma is a generalization of \cite[2.1]{KPSW}. In
particular we get the following corollary

\begin{corollary}\label{GenEulerT-inv}
  Suppose that we are in the situation of Lemma
  \ref{DiagramLemma}. Then the generalized Euler sequence
  \eqref{EulerSequenceGen} associated to $\Lambda_\bC$ is the
  $\bT_\Lambda$-invariant part (zero grading with respect to
  $\Lambda$) of the exact sequence of sheaves of differentials
  associated to the map $\pi$:
  \begin{eq}\label{DiffCoxThmDiagram}
    0\lra \pi^*\Omega_X\lra \Omega_{\widehat Y_\Lambda} \lra
    \Omega_{\widehat Y_\Lambda/X}\lra 0
  \end{eq}
\end{corollary}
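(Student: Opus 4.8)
The plan is to obtain the corollary by extracting the weight-zero part (equivalently, the $\Lambda$-degree zero part) of the diagram \eqref{DiffCoxPfDiagram} furnished by Lemma \ref{DiagramLemma}. The key observation is that all three sheaves in the upper row of \eqref{DiffCoxPfDiagram} are pushforwards under $\pi$ of $\bT_\Lambda$-equivariant sheaves on $\widehat{Y}_\Lambda$, so each carries a $\Lambda$-grading given by the decomposition into $\bT_\Lambda$-weight spaces, and the $\bT_\Lambda$-invariant part is precisely the degree-zero summand. Since the vertical arrows in \eqref{DiffCoxPfDiagram} are isomorphisms of $\Lambda$-graded sheaves, it suffices to identify the degree-zero part of the lower row and transport it across these isomorphisms.

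First I would record that $\pi$ is an affine morphism, being a locally trivial principal $\bT_\Lambda$-bundle, hence $\pi_*$ is exact on quasi-coherent sheaves; applying it to the relative cotangent sequence \eqref{DiffCoxThmDiagram} produces exactly the exact upper row of \eqref{DiffCoxPfDiagram}. Taking $\bT_\Lambda$-invariants of \eqref{DiffCoxThmDiagram} therefore amounts to selecting the degree-zero summand of that pushed-forward sequence.

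Next I would compute the degree-zero part of the lower row. Because $\widehat{S}^0=\cO_X(D_0)=\cO_X$, extracting the weight-zero summand from the sequence obtained by tensoring \eqref{EulerSequenceGen} with $\widehat{\cS}_\Lambda$ returns, term by term, $\Omega_X$, $\cR_\Lambda$ and $\Lambda_X$ --- that is, the generalized Euler sequence \eqref{EulerSequenceGen} itself. Since passage to a graded summand is exact and the vertical isomorphisms of \eqref{DiffCoxPfDiagram} are graded, the degree-zero part of the upper row is isomorphic to \eqref{EulerSequenceGen}, which is the assertion.

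The one point requiring care is the bookkeeping that identifies the geometric notion of ``$\bT_\Lambda$-invariant part of the relative differential sequence on $\widehat{Y}_\Lambda$'' with the algebraic ``degree-zero graded piece of $\pi_*$''. This rests on the local triviality of $\pi$ and the isomorphism $\widehat{\cS}_\Lambda(U_i)\iso\cO_X(U_i)\otimes\bC[\Lambda]$ used in the proof of Lemma \ref{DiagramLemma}, under which the $\bT_\Lambda$-weight grading on sections coincides with the $\Lambda$-grading of the Cox sheaf; I would invoke that computation rather than redo it.
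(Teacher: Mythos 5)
Your proposal is correct and follows exactly the route the paper intends: the paper offers no separate proof, presenting the corollary as immediate from Lemma \ref{DiagramLemma}, and your argument---extract the $\Lambda$-degree-zero (i.e.\ $\bT_\Lambda$-invariant) summand of the graded diagram \eqref{DiffCoxPfDiagram}, using that taking a graded summand is exact, that the vertical arrows are graded isomorphisms, and that $\widehat{S}^0=\cO_X$ makes the degree-zero part of the lower row equal to \eqref{EulerSequenceGen}---is precisely the intended filling-in of that step. Your additional observations (exactness of $\pi_*$ for the affine morphism $\pi$, and the identification of the torus-weight grading with the Cox-sheaf grading via the local trivialization $\widehat{\cS}_\Lambda(U_i)\iso\cO_X(U_i)\otimes\bC[\Lambda]$ from the proof of Lemma \ref{DiagramLemma}) are accurate and consistent with the paper's setup.
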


The next corollary follows immediately by the extension property of
Zariski differentials, which was explained in Section
\ref{DiffFormsSection}. We recall that the Cox ring $\cS_\Lambda$ is integrally closed by~\cite[Theorem~1.1]{EKW}.

\begin{corollary}\label{DiffCoxRing}
  Suppose that we are in the situation of Lemma \ref{DiagramLemma} and
  assume that the sheaf of K\"ahler differentials on $X$ is
  reflexive. Then the $\cS_\Lambda$-module $\Gamma(\cR_\Lambda)$ (see
  \ref{ModuleOfSections}) is isomorphic to the module of Zariski
  differentials of the Cox ring $\cS_\Lambda$:
  $$\Gamma(\cR_\Lambda)\iso\Omega_{\cS_\Lambda}^{\vee\vee}$$
\end{corollary}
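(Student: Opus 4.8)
The plan is to read off both sides of the asserted isomorphism as spaces of global sections on the characteristic space $\widehat{Y}_\Lambda$, and to match them using Lemma \ref{DiagramLemma} together with the extension property of reflexive sheaves recalled in Section \ref{DiffFormsSection}. First I would take global sections over $X$ of the middle vertical isomorphism $\widehat\varphi\colon \pi_*\Omega_{\widehat{Y}_\Lambda}\iso \cR_\Lambda\otimes_{\cO_X}\widehat{\cS}_\Lambda$ furnished by Lemma \ref{DiagramLemma}. Since $\widehat{\cS}_\Lambda=\bigoplus_{\lambda\in\Lambda}\cO_X(D_\lambda)$ and taking $\H^0$ commutes with the direct sum over $\Lambda$ on the Noetherian scheme $X$, the right-hand side yields $\bigoplus_\lambda \H^0(X,\cR_\Lambda(D_\lambda))=\Gamma(\cR_\Lambda)$, whereas the left-hand side gives $\H^0(X,\pi_*\Omega_{\widehat{Y}_\Lambda})=\H^0(\widehat{Y}_\Lambda,\Omega_{\widehat{Y}_\Lambda})$. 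This already produces an isomorphism $\Gamma(\cR_\Lambda)\iso \H^0(\widehat{Y}_\Lambda,\Omega_{\widehat{Y}_\Lambda})$ of graded $\cS_\Lambda$-modules, the module structure being the one induced by the derivation $d$ of \eqref{Derivation}.

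For the other side, I would use that $Y_\Lambda=\Spec\cS_\Lambda$ is affine and normal, its coordinate ring being integrally closed, so that $\Omega_{\cS_\Lambda}^{\vee\vee}=\H^0(Y_\Lambda,\Omega_{Y_\Lambda}^{\vee\vee})$. Since $\iota\colon\widehat{Y}_\Lambda\hookrightarrow Y_\Lambda$ is an open immersion whose complement has codimension $\geq 2$, the extension property gives $\iota_*\Omega_{\widehat{Y}_\Lambda}^{\vee\vee}=\Omega_{Y_\Lambda}^{\vee\vee}$, and hence $\Omega_{\cS_\Lambda}^{\vee\vee}=\H^0(\widehat{Y}_\Lambda,\Omega_{\widehat{Y}_\Lambda}^{\vee\vee})$. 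Comparing with the previous paragraph, the corollary is reduced to the single equality $\H^0(\widehat{Y}_\Lambda,\Omega_{\widehat{Y}_\Lambda})=\H^0(\widehat{Y}_\Lambda,\Omega_{\widehat{Y}_\Lambda}^{\vee\vee})$, which holds as soon as the sheaf $\Omega_{\widehat{Y}_\Lambda}$ is itself reflexive.

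The main point, and the only place where the hypothesis that $\Omega_X$ be reflexive enters, is therefore to establish the reflexivity of $\Omega_{\widehat{Y}_\Lambda}$. Here I would exploit that $\pi$ is a locally trivial principal $\bT_\Lambda$-bundle, so in particular smooth, whence the relative cotangent sequence $0\to\pi^*\Omega_X\to\Omega_{\widehat{Y}_\Lambda}\to\Omega_{\widehat{Y}_\Lambda/X}\to 0$ is short exact. The quotient $\Omega_{\widehat{Y}_\Lambda/X}$ is free of rank $r$, being $\pi^*\Lambda_X$ as recorded in the proof of Lemma \ref{DiagramLemma}; and a local trivialization $\widehat{Y}_\Lambda\iso U\times\bT_\Lambda$ identifies $\pi^*\Omega_X$ with the pullback of the reflexive sheaf $\Omega_X$ along the projection onto the smooth factor $U$, so $\pi^*\Omega_X$ is again reflexive. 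On the normal variety $\widehat{Y}_\Lambda$ reflexivity amounts to being torsion-free and $S_2$; torsion-freeness of the middle term is immediate since the quotient is locally free, and the $S_2$ property passes to the middle term of such an extension by the depth lemma, using that $\widehat{Y}_\Lambda$, being normal, is itself $S_2$. Thus $\Omega_{\widehat{Y}_\Lambda}$ is reflexive and the argument closes.

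I expect the reflexivity of $\Omega_{\widehat{Y}_\Lambda}$ to be the only genuine obstacle: once it is in hand, every remaining identification is either formal (global sections of a direct image, of a direct sum over $\Lambda$, and of a sheaf on an affine scheme) or quoted (the extension property and Lemma \ref{DiagramLemma}). A secondary bookkeeping point is to verify that each isomorphism in the chain respects the graded $\cS_\Lambda$-module structure, i.e.\ the multiplication coming from $\widehat{\cS}_\Lambda$ and the derivation $d$; this is immediate from the $\widehat{\cS}_\Lambda$-linearity of $\widehat\varphi$ built into Lemma \ref{DiagramLemma}.
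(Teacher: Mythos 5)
Your proposal is correct and takes essentially the same route as the paper, whose proof consists of exactly the ingredients you assemble: the isomorphism of Lemma \ref{DiagramLemma} to identify $\Gamma(\cR_\Lambda)$ with $\H^0(\widehat{Y}_\Lambda,\Omega_{\widehat{Y}_\Lambda})$, the extension property of Zariski differentials across the codimension $\geq 2$ complement $Y_\Lambda\setminus\widehat{Y}_\Lambda$, and the normality of $Y_\Lambda$ (which the paper justifies by citing \cite{EKW} for the integral closedness of $\cS_\Lambda$, a point you assert but should likewise back up). The only stylistic divergence is your depth-theoretic ($S_2$ plus torsion-free) verification that $\Omega_{\widehat{Y}_\Lambda}$ is reflexive, which can be obtained more directly from the local splitting $\Omega_{\widehat{Y}_\Lambda}|_{\pi^{-1}(U_i)}\iso \left(\pi^*\Omega_X\oplus\cO^{\oplus r}\right)|_{\pi^{-1}(U_i)}$ already established in the proof of Lemma \ref{DiagramLemma}, since reflexivity is local, is preserved by flat pullback, and passes to direct sums.
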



Clearly, the sheaf $\cR_\Lambda$ will not change if $\Lambda$ is
replaced by another lattice whose $\bC$ linear span is the same as
$\Lambda$. That is, if $\Lambda'\subset\Lambda$ is a sublattice of
finite index then $\cR_{\Lambda'}=\cR_\Lambda$ and thus
$\Gamma_{\Lambda'}(\cR_{\Lambda'})$ is $\Lambda'$ graded part of
$\Gamma_{\Lambda}(\cR_{\Lambda})$. Thus, if we are in the situation of
Corollary \ref{DiffCoxRing} this gives a clear description of Zariski
differentials on $\Spec \cS_{\Lambda'}$ in terms of Zariski
differentials over $\Spec \cS_{\Lambda}$. On the other hand, the
behaviour of K\"ahler differentials is much more intricate.  This
problem was studied in a special situation of $X=\bP^n$ in
\cite{GrebRollenske}.

\subsection{Generation of the Cox ring}\label{FiniteGenerationSection}
Let us apply the functor $\Gamma$ defined in Section
\ref{CoxRingSection} to generalized Euler sequence
\eqref{EulerSequenceLambda}. The result if the following sequence of
$\Lambda$-graded $\cS_\Lambda$-modules
$$\begin{array}{ccccccc}
 0&\lra&\Gamma(\Omega_X)&\lra&
\Gamma(\cR_\Lambda)&\lra&\Gamma(\Lambda_X)=\Lambda\otimes_\bZ\cS_\Lambda
\end{array}
$$
We compose the right hand arrow in the above sequence with
$\kappa\otimes id: \Lambda\otimes\cS_\Lambda\ra\cS_\Lambda$ where
$\kappa$ is the map defined in Section \ref{CoxRingSection}.  We call
the resulting homomorphism
$\widehat{\kappa}:\Gamma(\cR_\Lambda)\ra\cS_\Lambda$.
\begin{lemma}\label{kappaSurjective}
  The homomorphism $\widehat{\kappa}$ is surjective onto
  $\cS_\Lambda^+$.
\end{lemma}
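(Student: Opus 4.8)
The plan is to exploit that $\widehat{\kappa}$ is a homomorphism of $\Lambda$-graded modules of degree zero, so it suffices to understand it one graded piece at a time: in degree $\lambda$ the map is the composition
$$\H^0(X,\cR_\Lambda(D_\lambda))\lra \H^0(X,\Lambda_X(D_\lambda))=\Lambda_\bC\otimes S^\lambda \xrightarrow{\ \kappa\otimes id\ } S^\lambda.$$
The key computation is that the canonical derivation from \eqref{Derivation} provides an explicit preimage. Indeed, for $s\in S^\lambda$ the section $ds=(ds_i,s_i\cdot D_\lambda)$ maps under $\cR_\Lambda(D_\lambda)\to\Lambda_X(D_\lambda)$ to its second component, which glues to the global section $\lambda\otimes s\in\Lambda_\bC\otimes S^\lambda$; applying $\kappa\otimes id$ yields $\widehat{\kappa}(ds)=\kappa(\lambda)\,s$. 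Hence whenever $\kappa(\lambda)>0$ the scalar $\kappa(\lambda)$ is invertible and $s=\widehat{\kappa}\bigl(\kappa(\lambda)^{-1}ds\bigr)$ lies in the image. Since $\cS^+_\Lambda=\bigoplus_{\kappa(\lambda)>0}S^\lambda$, this shows $\cS^+_\Lambda\subseteq\operatorname{im}\widehat{\kappa}$.

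For the reverse inclusion I would check that $\widehat{\kappa}$ vanishes on every graded piece with $\kappa(\lambda)\leq 0$. If $\kappa(\lambda)<0$ then $\lambda\notin\Eff_\Lambda$, so the target $S^\lambda$ is already zero and there is nothing to prove. If $\kappa(\lambda)=0$ and $S^\lambda\neq 0$, then $\lambda\in\Eff_\Lambda\cap\Lambda$, and the defining property of $\kappa$ (namely $\kappa(\lambda)\geq 1$ for every nonzero effective class) forces $\lambda=0$.

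The remaining, and most delicate, point is the degree-zero piece, and this is where I expect the real content to sit: one must show that $\widehat{\kappa}$ is zero in degree $0$ even though $S^0=\bC\neq 0$. Here I would invoke the long exact cohomology sequence of the generalized Euler sequence \eqref{EulerSequenceGen} for $V=\Lambda_\bC$. The connecting homomorphism $\H^0(X,\Lambda_X)=\Lambda_\bC\to\H^1(X,\Omega_X)$ is, by construction of the extension, the evaluation of its class in $\Ext^1(\Lambda_X,\Omega_X)=\Hom(\Lambda_\bC,\H^1(X,\Omega_X))$, i.e.\ the inclusion $\Lambda_\bC\hookrightarrow\H^1(X,\Omega_X)$, which is injective by the standing hypothesis that $c_1$ is injective. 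Consequently $\H^0(X,\cR_\Lambda)\to\Lambda_\bC$ is the zero map, so $\widehat{\kappa}$ vanishes in degree $0$. Combining the two inclusions gives $\operatorname{im}\widehat{\kappa}=\cS^+_\Lambda$, as claimed. The main obstacle is thus not any hard estimate but the correct bookkeeping: tracing $ds$ through the projection to $\Lambda_X$ to obtain the clean formula $\widehat{\kappa}(ds)=\kappa(\lambda)s$, and recognizing that the injectivity of the Chern class map is precisely what rules out the spurious degree-zero contribution.
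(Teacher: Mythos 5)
Your proposal is correct and takes essentially the same route as the paper: your key formula $\widehat{\kappa}(ds)=\kappa(\lambda)\,s$ obtained from the derivation of \eqref{Derivation} is exactly the content of the paper's commutative diagram \eqref{kappa}, since $d$ is by construction the splitting of the jet sequence composed with the map of extensions in \eqref{JetCoxDiagram}. The only difference is presentational: you spell out the degree bookkeeping (the cases $\kappa(\lambda)<0$ and $\kappa(\lambda)=0$) and justify the vanishing of $\H^0(X,\cR_\Lambda)\to\H^0(X,\Lambda_X)$ by identifying the connecting homomorphism with the injective inclusion $\Lambda_\bC\hookrightarrow\H^1(X,\Omega_X)$, a step the paper asserts without detail.
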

\begin{proof}
  First we note that the image of $\widehat{\kappa}$ is contained in
  $\cS^+_\Lambda$. This is because the map $\H^0(X,\cR_\Lambda) \ra
  \H^0(X,\Lambda_X)$ is zero. Next we use \eqref{JetCoxDiagram} to get
  the following commutative diagram
\begin{eq}\label{kappa}
  \xymatrix{ \H^0(X,\cR_{D_\lambda}(D_\lambda))\ar[r]\ar[d]&
    \H^0(X,\cR_\Lambda(D_\lambda))\ar[d]\ar[dr]^{\widehat{\kappa}}& \\
    \H^0(X,\cO(D_\lambda))\ar[r]\ar[r]_{c_1(D_\lambda)\otimes
      id\ \ \ }\ar@/_/[u]&
    \Lambda\otimes\H^0(X,\cO(D_\lambda))\ar[r]_{\ \ \ \kappa\otimes id}&
    \H^0(X,\cO(D_\lambda))
      }
\end{eq}

The composition of the homomorphisms in the lower row is
an isomorphism because for any effective non-zero divisor $\kappa(c_1(D_\lambda))\ne 0$.
Hence $\widehat{\kappa}$ is onto $\cS_\Lambda^+$.
\end{proof}

We recall the following observation, which is classical and probably
know since Hilbert's time.
\begin{lemma}\label{littleHilbert}
  Let $\cA=\bigoplus_{m\geq 0}\cA^m$ be $\bZ_{\geq 0}$-graded ring. If
  homogeneous elements $a_1,\dots, a_t$ generate the ideal
  $\cA^+=\bigoplus_{m >0}\cA^m$, then they generate $\cA$ as
  $\cA^0$-algebra.
\end{lemma}

Combining the two above lemmata we get

\begin{corollary}\label{generation}
  If $\Gamma(\cR_\Lambda)$ is generated as $\cS_\Lambda$-module by
  $\Lambda$-homogeneous elements $m_1,\dots,m_t$, then $\cS_\Lambda$ is
  a finitely generated $\bC$-algebra with generators
  $\widehat{\kappa}(m_1),\dots, \widehat{\kappa}(m_t)$.
\end{corollary}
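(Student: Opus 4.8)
The plan is to reduce everything to the elementary graded-algebra statement of Lemma \ref{littleHilbert} by first replacing the $\Lambda$-grading of $\cS_\Lambda$ with a single $\bZ_{\geq 0}$-grading. Concretely, I would use the linear form $\kappa:\Lambda\to\bZ$ from Section \ref{CoxRingSection} to define $\cA^m=\bigoplus_{\kappa(\lambda)=m}S^\lambda$ and set $\cA=\cS_\Lambda=\bigoplus_{m}\cA^m$. The point of this coarsening is that, whenever $S^\lambda\neq 0$, the class $\lambda$ lies in $\Eff_\Lambda$, so $\kappa(\lambda)\geq 0$, with $\kappa(\lambda)\geq 1$ as soon as $\lambda\neq 0$; hence only non-negative degrees occur and $\cA^0=S^0=\H^0(X,\cO_X)=\bC$, the last equality because $X$ is complete and irreducible. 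With this grading the irrelevant ideal $\cA^+=\bigoplus_{m>0}\cA^m$ coincides exactly with $\cS_\Lambda^+=\bigoplus_{\kappa(\lambda)>0}S^\lambda$, so Lemma \ref{littleHilbert} becomes directly applicable once a suitable generating set of $\cS_\Lambda^+$ is in hand.

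Next I would produce that generating set. The homomorphism $\widehat{\kappa}:\Gamma(\cR_\Lambda)\to\cS_\Lambda$ is a map of $\Lambda$-graded $\cS_\Lambda$-modules, being the composite of the $\cS_\Lambda$-linear map $\Gamma(\cR_\Lambda)\to\Lambda\otimes_\bZ\cS_\Lambda$ with $\kappa\otimes\mathrm{id}$. Consequently, if $\Gamma(\cR_\Lambda)=\sum_{i}\cS_\Lambda\cdot m_i$, then its image is $\sum_i\cS_\Lambda\cdot\widehat{\kappa}(m_i)$; by Lemma \ref{kappaSurjective} this image equals $\cS_\Lambda^+$, so the elements $\widehat{\kappa}(m_1),\dots,\widehat{\kappa}(m_t)$ generate $\cS_\Lambda^+$ as an ideal. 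Moreover each $m_i$ is $\Lambda$-homogeneous of some degree $\lambda_i$, and because $\widehat{\kappa}$ respects the grading we have $\widehat{\kappa}(m_i)\in S^{\lambda_i}$; thus the $\widehat{\kappa}(m_i)$ are homogeneous for the $\bZ_{\geq 0}$-grading above (those with $\kappa(\lambda_i)=0$ necessarily vanish, since the image lies in $\cS_\Lambda^+$, and may simply be discarded). Applying Lemma \ref{littleHilbert} to $\cA=\cS_\Lambda$ then yields that $\widehat{\kappa}(m_1),\dots,\widehat{\kappa}(m_t)$ generate $\cS_\Lambda$ as an algebra over $\cA^0=\bC$, which is the assertion.

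The genuinely substantive inputs, namely the surjectivity of $\widehat{\kappa}$ onto $\cS_\Lambda^+$ and the Hilbert-type lemma, are already established, so no serious obstacle remains; the only care needed is in the bookkeeping of the grading. Specifically, one must check that $\kappa$ really does turn the $\Lambda$-grading into a genuine $\bZ_{\geq 0}$-grading whose degree-zero part reduces to the scalars $\bC$, and that the two notions of irrelevant ideal, $\cA^+$ and the previously defined $\cS_\Lambda^+$, agree. Both reductions rest on the pointedness of $\Eff_\Lambda$, a consequence of the projectivity of $X$, together with the strict positivity $\kappa(\lambda)\geq 1$ on nonzero effective classes, and it is precisely this strict positivity that prevents extra generators from hiding in degree $0$.
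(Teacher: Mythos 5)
Your proposal is correct and is essentially the paper's own argument: the paper proves this corollary simply by ``combining'' Lemma \ref{kappaSurjective} and Lemma \ref{littleHilbert}, exactly as you do. Your careful bookkeeping --- coarsening the $\Lambda$-grading to a $\bZ_{\geq 0}$-grading via $\kappa$, checking $\cA^0=S^0=\bC$ and $\cA^+=\cS_\Lambda^+$, and using the graded $\cS_\Lambda$-linearity of $\widehat{\kappa}$ to see that the $\widehat{\kappa}(m_i)$ generate the ideal $\cS_\Lambda^+$ --- just makes explicit the details the paper leaves to the reader.
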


The above result can be put as a part of the following equivalence
statement.

\begin{theorem}\label{GenerationEquivalence}
  Let $X$ be a projective, normal variety and $\Lambda\subset \Pic X$ a
  finitely generated lattice of Cartier divisors as in Section \ref{CoxRingSection}.
  Suppose moreover that $\Lambda$ contains an ample divisor and the
  sheaf $\Omega_X$ is reflexive. Then the following conditions are
  equivalent:
\begin{enumerate}
\item The Cox ring $\cS_\Lambda$ is a finitely generated $\bC$-algebra.
\item The module $\Gamma_\Lambda(\cR_\Lambda)$ is finitely generated
  over $\cS_\Lambda$.
\item For every reflexive sheaf $\cF$ over $X$, the graded module
  $\Gamma_\Lambda(\cF)$ is finitely generated over $\cS_\Lambda$.
\end{enumerate}
\end{theorem}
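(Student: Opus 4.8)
The plan is to prove the cyclic chain $(2)\Rightarrow(1)\Rightarrow(3)\Rightarrow(2)$. The implication $(2)\Rightarrow(1)$ is already established: it is exactly Corollary \ref{generation}, since a finite homogeneous generating set $m_1,\dots,m_t$ of $\Gamma_\Lambda(\cR_\Lambda)$ over $\cS_\Lambda$ produces the finite set of $\bC$-algebra generators $\widehat\kappa(m_1),\dots,\widehat\kappa(m_t)$ of $\cS_\Lambda$. For $(3)\Rightarrow(2)$ I would simply take $\cF=\cR_\Lambda$; the only point to verify is that $\cR_\Lambda$ is itself reflexive, so that it is an admissible test sheaf in $(3)$. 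This follows from the defining sequence \eqref{EulerSequenceLambda}: the quotient $\Lambda_X$ is free, the subsheaf $\Omega_X$ is reflexive by hypothesis, and a diagram chase with the restriction maps across a closed subset of codimension $\geq 2$ (using the extension property recalled in Section \ref{DiffFormsSection}) shows that the middle term inherits the extension property and hence is reflexive.

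The substance of the theorem is $(1)\Rightarrow(3)$. Assuming $\cS_\Lambda$ finitely generated and recalling that $\Lambda$ contains an ample class, Assumption \ref{ampleness-finitness} holds, so we are in the set-up of Section \ref{CharacteristicSpacesSection}: the characteristic space $\pi:\widehat{Y}_\Lambda\ra X$ is defined, $Y_\Lambda=\Spec\cS_\Lambda$ is affine and Noetherian, and it is normal since $\cS_\Lambda$ is integrally closed (recalled before Corollary \ref{DiffCoxRing}). The key identification, for $\cF$ any coherent sheaf, is
\[
\Gamma_\Lambda(\cF)=\bigoplus_{\lambda}\H^0(X,\cF(D_\lambda))
=\H^0\bigl(X,\cF\otimes_{\cO_X}\widehat{\cS}_\Lambda\bigr)
=\H^0(\widehat{Y}_\Lambda,\pi^*\cF),
\]
where the middle equality is the projection formula for the affine morphism $\pi$ together with $\pi_*\cO_{\widehat{Y}_\Lambda}=\widehat{\cS}_\Lambda$, exactly as in Lemma \ref{DiagramLemma}. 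Now suppose $\cF$ is reflexive. Since $\pi$ is a locally trivial principal $\bT_\Lambda$-bundle, over each trivializing $U_i$ one has $\pi^{-1}(U_i)\iso U_i\times\bT_\Lambda$ and $\pi^*\cF$ restricts to the external product of $\cF|_{U_i}$ with the free sheaf $\cO_{\bT_\Lambda}$, which is reflexive; hence $\pi^*\cF$ is reflexive on $\widehat{Y}_\Lambda$. Writing $\iota:\widehat{Y}_\Lambda\hookrightarrow Y_\Lambda$ for the open immersion with complement of codimension $\geq 2$, the extension property of reflexive sheaves over the normal variety $Y_\Lambda$ gives a coherent reflexive sheaf $\iota_*\pi^*\cF$ with $\H^0(Y_\Lambda,\iota_*\pi^*\cF)=\H^0(\widehat{Y}_\Lambda,\pi^*\cF)$. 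Since $Y_\Lambda$ is affine and $\cS_\Lambda$ Noetherian, this module of global sections is finitely generated, proving $(3)$.

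The main obstacle lies in the final part of $(1)\Rightarrow(3)$: one must ensure both that $\pi^*\cF$ is reflexive --- so that no sections are lost when extending across $Y_\Lambda\setminus\widehat{Y}_\Lambda$ --- and that its reflexive extension $\iota_*\pi^*\cF$ stays coherent on $Y_\Lambda$. Reflexivity of $\pi^*\cF$ is precisely where the smoothness of $\pi$ (its principal-bundle structure) is indispensable, while coherence of $\iota_*\pi^*\cF$ is the standard $S_2$-ification of a coherent reflexive sheaf over a big open subset of a normal Noetherian scheme. Once these two facts are in place, Noetherianity of $\cS_\Lambda$ forces finite generation, and the cycle closes.
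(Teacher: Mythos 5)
Your proposal is correct, and its logical skeleton is the same cycle the paper uses: $(2)\Rightarrow(1)$ via Corollary \ref{generation}, $(3)\Rightarrow(2)$ by reflexivity of $\cR_\Lambda$ (which the paper asserts without proof; your extension-property argument supplies it --- just remember that the criterion is \emph{torsion-free plus} extension property, the torsion-freeness of $\cR_\Lambda$ following since it is an extension of the torsion-free $\Lambda_X$ by the torsion-free $\Omega_X$). Where you genuinely diverge is the substantive implication $(1)\Rightarrow(3)$. The paper never leaves $X$: picking an ample $D_{\lambda_0}\in\Lambda$, global generation of $\cF^\vee(D_{m\lambda_0})$ for $m\gg 0$ gives a surjection $\cO_X^{\oplus N}\ra\cF^\vee(D_{m\lambda_0})$; dualizing (this is where reflexivity of $\cF$ enters, via $\cF^{\vee\vee}=\cF$) and twisting gives an injection $\cF\hookrightarrow\cO(D_{m\lambda_0})^{\oplus N}$, so left-exactness of $\Gamma_\Lambda$ together with Lemma \ref{Rank1CoxModule} exhibits $\Gamma_\Lambda(\cF)$ as a submodule of the free module $\cS_\Lambda[m\lambda_0]^{\oplus N}$, and Noetherianity of $\cS_\Lambda$ concludes. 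Your route instead goes through the characteristic space: the identification $\Gamma_\Lambda(\cF)=\H^0(\widehat{Y}_\Lambda,\pi^*\cF)$, reflexivity of the flat pullback $\pi^*\cF$, and coherence of the reflexive extension $\iota_*\pi^*\cF$ across the codimension $\geq 2$ boundary in the normal affine variety $Y_\Lambda$. This is sound, but it consumes heavier inputs: normality of $Y_\Lambda$ (integral closedness of $\cS_\Lambda$, \cite{EKW}), the codimension estimate for $Y_\Lambda\setminus\widehat{Y}_\Lambda$ (\cite[I.6.3]{ADHL}), compatibility of flat pullback with duals, and coherence of pushforwards of reflexive sheaves from big open subsets. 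In exchange it proves slightly more than asked: it identifies $\Gamma_\Lambda(\cF)$ concretely as the sections of a coherent reflexive sheaf on $\Spec\cS_\Lambda$, in the same spirit as Corollary \ref{DiffCoxRing} identifies $\Gamma(\cR_\Lambda)$ with the Zariski differentials of the Cox ring; the paper's argument is shorter, more elementary, and needs nothing beyond ampleness and Noetherianity.
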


\begin{proof}
  We proved the implication $(2)\Rightarrow(1)$. The implication
  $(3)\Rightarrow(2)$ is clear since $\cR_\Lambda$ is reflexive.  To
  prove the implication $(1)\Rightarrow(3)$ suppose that $\cF$ is
  reflexive and $D_{\lambda_0}\in\Lambda$ is ample. Thus
  $\cF^\vee(D_{m\lambda_0})$ is generated by global section for $m\gg
  0$ or, equivalently, we have a surjective morphism $\cO^{\oplus
    N}\ra\cF^\vee(D_{m\lambda_0})$ for some positive integer
  $N$. Dualising and twisting we get injective
  $\cF\ra\cO(D_{m\lambda_0})^{\oplus N}$. By left-exactness of
  $\Gamma_\Lambda$ we can present $\Gamma_\Lambda(\cF)$ as a submodule
  of the free finitely generated $\cS_\Lambda$-module
  $\cS_\Lambda[m\lambda_0]^{\oplus N}$. Because $\cS_\Lambda$ is
  Noetherian $\Gamma_\Lambda(\cF)$ is finitely generated too.
\end{proof}

\subsection{The theorem of Jaczewski revisited}\label{GenJaczewskiThmSection}

\begin{theorem}\label{GenJaczewskiThm} Suppose that $X$ is a smooth
  projective variety and $\Lambda$ a free finitely generated group of
  Cartier divisors, as defined in the section \ref{CoxRingSection},
  which contains an ample divisor. If $\cR_\Lambda$ splits into the
  sum of line bundles, that is $\cR_\Lambda=\bigoplus\cL_i$, and for
  every $\cL_i$ we have $\cL_i\iso\cO(-D_{\lambda_i})$ for some
  $\lambda_i\in\Lambda$, then $X$ is a toric variety, $\Lambda=\Pic X$
  and $D_i$'s are linearly equivalent to torus invariant prime divisors.
\end{theorem}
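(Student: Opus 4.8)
The plan is to show that the splitting hypothesis forces the Cox ring $\cS_\Lambda$ to be a polynomial ring, and then to invoke Theorem~\ref{CoxHuKeel2}, which hands us toricness, smoothness and $\Lambda=\Pic X=\Cl X$ at once. The final assertion about the $D_{\lambda_i}$ will then follow by matching the $\Lambda$-gradings of the generators with the degrees of the toric variables.

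First I would apply the functor $\Gamma$ to the splitting. From the generalized Euler sequence \eqref{EulerSequenceLambda} one reads off $\rank\cR_\Lambda=n+r$, since $\rank\Omega_X=n$ and $\rank\Lambda_X=\dim_\bC\Lambda_\bC=r$ (the latter because $c_1$ is injective). Hence the hypothesis gives $\cR_\Lambda=\bigoplus_{i=1}^{n+r}\cO(-D_{\lambda_i})$. Each summand satisfies $\cO(-D_{\lambda_i})=\cO(D_{-\lambda_i})$, so by Lemma~\ref{Rank1CoxModule} we have $\Gamma(\cO(-D_{\lambda_i}))\iso\cS_\Lambda[-\lambda_i]$, a free $\cS_\Lambda$-module of rank one whose generator $m_i$ sits in degree $\lambda_i$. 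Therefore $\Gamma(\cR_\Lambda)\iso\bigoplus_{i=1}^{n+r}\cS_\Lambda[-\lambda_i]$ is free over $\cS_\Lambda$ on the $n+r$ homogeneous elements $m_1,\dots,m_{n+r}$. By Corollary~\ref{generation} it follows that $\cS_\Lambda$ is a finitely generated $\bC$-algebra, generated by the $n+r$ homogeneous elements $y_i:=\widehat{\kappa}(m_i)$, where $y_i$ has degree $\lambda_i$.

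Next I would run a Krull-dimension count. Now that $\cS_\Lambda$ is known to be finitely generated, Assumption~\ref{ampleness-finitness} holds and the constructions of Section~\ref{CharacteristicSpacesSection} apply: $\pi\colon\widehat{Y}_\Lambda\ra X$ is a locally trivial principal $\bT_\Lambda$-bundle, so $\dim\widehat{Y}_\Lambda=n+r$, and since $\widehat{Y}_\Lambda$ is the complement of a codimension $\geq 2$ subset of $Y_\Lambda=\Spec\cS_\Lambda$, we get $\dim\cS_\Lambda=n+r$. Moreover $\cS_\Lambda$ is a domain, being a subring of $\bC(X)$. A domain of Krull dimension $n+r$ generated over $\bC$ by $n+r$ elements must be a polynomial ring: the surjection $\bC[x_1,\dots,x_{n+r}]\twoheadrightarrow\cS_\Lambda$, $x_i\mapsto y_i$, has prime kernel, which by the dimension equality has height $0$ and hence is zero. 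Thus $\cS_\Lambda\iso\bC[x_1,\dots,x_{n+r}]$ is a polynomial ring, and Theorem~\ref{CoxHuKeel2} yields that $X$ is a smooth toric variety with $\Lambda=\Pic X=\Cl X$.

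For the last assertion I would match degrees. The argument above also shows the $y_i$ are algebraically independent and nonzero, so they form a minimal system of homogeneous generators of $\cS_\Lambda$ of cardinality equal to $\dim\cS_\Lambda$; consequently the multiset of their degrees $\{\lambda_i\}$ agrees with the degree multiset of any minimal homogeneous generating set, read off from $\cS_\Lambda^+/(\cS_\Lambda^+)^2$. For the total coordinate ring of a smooth toric variety this multiset is exactly $\{[D_\rho]\}$, the classes of the torus invariant prime divisors attached to the rays of the fan. Hence each $\lambda_i=[D_\rho]$ for some ray $\rho$, that is, $D_{\lambda_i}$ is linearly equivalent to a torus invariant prime divisor. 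I expect the main obstacle to be bookkeeping of the logical order rather than any deep difficulty: the dimension formula $\dim\cS_\Lambda=n+r$ and the characteristic space itself require finite generation of $\cS_\Lambda$, which is \emph{not} assumed but is produced by Corollary~\ref{generation} from the splitting; so one must first extract finite generation and only afterwards invoke Section~\ref{CharacteristicSpacesSection}. A secondary point needing care is the identification, in the last step, of the minimal homogeneous generators $y_i$ with the toric variables so that the degrees $\lambda_i$ are correctly matched with the classes $[D_\rho]$.
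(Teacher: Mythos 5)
Your proof is correct, and its skeleton is the paper's: freeness of $\Gamma(\cR_\Lambda)$ of rank $n+r$ via Lemma~\ref{Rank1CoxModule}, generation of $\cS_\Lambda$ by $n+r$ homogeneous elements via Corollary~\ref{generation}, the dimension count $\dim\cS_\Lambda=n+r$ forcing $\cS_\Lambda$ to be a polynomial ring, and then Theorem~\ref{CoxHuKeel2}. The paper compresses the dimension count into one sentence; your filled-in version (finite generation must come first, only then does the principal $\bT_\Lambda$-bundle $\widehat{Y}_\Lambda\ra X$ with codimension $\geq 2$ complement in $Y_\Lambda$ give $\dim\cS_\Lambda=n+r$) is exactly the intended argument, and your remark about the logical order is well taken.

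Where you genuinely diverge is the final assertion that the $D_{\lambda_i}$ are linearly equivalent to torus-invariant prime divisors. The paper argues on the sheaf side: on the toric variety $X$ the Euler sheaf $\cR_{\Pic X}$ splits as the sum of $\cO(-D_\rho)$ over the rays (by \cite{BatyrevMelnikov} or Corollary~\ref{GenEulerT-inv}), and by Atiyah's Krull--Schmidt theorem for coherent sheaves \cite{AtiyahKrullSchmidt} a decomposition into indecomposables is essentially unique, so the two splittings of $\cR_\Lambda$ must match term by term. You argue on the algebra side: the $y_i=\widehat{\kappa}(m_i)$ form a minimal homogeneous generating set (their $n+r$ images span $\cS_\Lambda^+/(\cS_\Lambda^+)^2$, which has dimension exactly $n+r$ for a polynomial ring, so they are a basis), the degree multiset of a minimal homogeneous generating set is an invariant of the graded ring by graded Nakayama (valid here since the grading is pointed via $\kappa$), and by Cox's description of the total coordinate ring of a smooth toric variety \cite{CoxJAG} that multiset is $\{[D_\rho]\}$. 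Both arguments are complete; yours trades the Krull--Schmidt input for commutative algebra plus Cox's theorem, staying entirely on the ring side once toricness is established, while the paper's uniqueness-of-splitting argument is more sheaf-theoretic and does not need to track generator degrees at all.

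One small slip: $\cS_\Lambda$ is not literally a subring of $\bC(X)$ --- the summation map $\bigoplus_\lambda S^\lambda\ra\bC(X)$ is a ring homomorphism but not injective, since distinct graded pieces can share elements (constants lie in $S^\lambda$ whenever $D_\lambda$ is effective). Integrality instead follows from the usual graded leading-term argument: order $\Lambda$ lexicographically, and note that the top-degree component of a product is a product of nonzero rational functions. Alternatively, you can avoid integrality of $\cS_\Lambda$ entirely: if the kernel $I$ of $\bC[x_1,\dots,x_{n+r}]\twoheadrightarrow\cS_\Lambda$ were nonzero, every minimal prime over $I$ would have height $\geq 1$, forcing $\dim\cS_\Lambda\leq n+r-1$, a contradiction.
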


\begin{proof}
  By Lemma \ref{Rank1CoxModule} the module $\Gamma(\cR_\Lambda)$ is free of
  rank $n+r$.  Therefore, by Corollary \ref{generation} the algebra
  $\cS_\Lambda$ is generated by $n+r$ elements.  Since its dimension
  is also equal to $n+r$ it is the polynomial ring and $X$ is a toric
  variety and $\Lambda=\Pic X$ by Theorem \ref{CoxHuKeel2}. On the
  toric variety $X$ the Euler sheaf $R_{\Pic X}$ splits as a sum of line bundles
  associated to the negatives of all torus invariant prime divisors (c.f.~\cite{BatyrevMelnikov}
  or Colloray~\ref{GenEulerT-inv}).
  The theorem follows since such decomposition is essentially unique by~\cite{AtiyahKrullSchmidt}.
\end{proof}

We can reformulate this result in the spirit of the original Jaczewski
theorem, \ref{Jaczewski}.

\begin{corollary}\label{GenJaczewski}
  Let $X$ be a smooth projective variety with $\H^1(X,\cO_X)=H^2(X,\cO_X)=0$.
  If the potential sheaf of $X$ splits into the direct sum of line
  bundles, then $X$ is a toric variety.
\end{corollary}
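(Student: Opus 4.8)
The plan is to deduce the statement from Theorem \ref{GenJaczewskiThm} by choosing $\Lambda$ so that $\Lambda_\bC=\H^1(X,\Omega_X)$; for such a choice the generalized Euler sequence \eqref{EulerSequenceGen} is exactly the full sequence \eqref{EulerSequence}, so that $\cR_\Lambda=\cR_X$. Since the potential sheaf is by definition $\cR_X^\vee$, and the dual of a finite direct sum of line bundles is again such a sum, the hypothesis that $\cR_X^\vee$ splits is equivalent to a splitting $\cR_X=\bigoplus_i\cL_i$ into line bundles. It therefore suffices to produce a free lattice $\Lambda\subset\Pic X$ which contains an ample class, satisfies $\Lambda_\bC=\H^1(X,\Omega_X)$, and is such that each $\cL_i\iso\cO(-D_{\lambda_i})$ for some $\lambda_i\in\Lambda$; the conclusion is then immediate from Theorem \ref{GenJaczewskiThm}.

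The two vanishing hypotheses enter only to guarantee the existence of such a $\Lambda$, and this is where the substance of the argument lies. From the exponential sequence, the vanishing of $\H^1(X,\cO_X)$ makes $c_1\colon\Pic X\to\H^2(X,\bZ)$ injective, so $\Pic X$ is finitely generated with trivial continuous part, while the vanishing of $\H^2(X,\cO_X)$ makes the same map surjective; hence $c_1$ identifies $\Pic X$ with $\H^2(X,\bZ)$. On the other hand $\H^2(X,\cO_X)=0$ together with Hodge symmetry gives $\H^{2,0}(X)=\H^{0,2}(X)=0$, so that $\H^2(X,\bC)=\H^{1,1}(X)=\H^1(X,\Omega_X)$. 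As the first Chern class of a line bundle is of Hodge type $(1,1)$, it already lies in this space and coincides, up to the constant ignored throughout Section \ref{JetSection}, with the class used to build \eqref{EulerSequence}. Consequently $c_1\colon\Pic X\otimes\bC\to\H^1(X,\Omega_X)$ is an isomorphism: the Chern classes of line bundles span all of $\H^1(X,\Omega_X)$, and the free rank of $\Pic X$ equals $r=\dim_\bC\H^1(X,\Omega_X)$. Projectivity of $X$ supplies an ample class, which is non-torsion, so a full-rank free lattice $\Lambda\subset\Pic X$ containing an ample divisor and satisfying $\Lambda_\bC=\H^1(X,\Omega_X)$ certainly exists, and for it $\cR_\Lambda=\cR_X$.

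The remaining, and I expect only genuinely delicate, step is to arrange that the summands $\cL_i$ of the given splitting are of the admissible form $\cO(-D_{\lambda_i})$ with $\lambda_i\in\Lambda$, that is, that the classes $[\cL_i]$ lie in a free lattice rather than merely in $\Pic X$ up to a torsion twist. This is a real issue: under the hypotheses $\Pic X\iso\H^2(X,\bZ)$ may well have torsion — an Enriques surface has $\H^1(X,\cO_X)=\H^2(X,\cO_X)=0$ yet torsion in $\Pic X$ and is not toric — so it is precisely the splitting hypothesis that must exclude such behaviour. I would attack this through the dualized Euler sequence $0\to\cO_X^{\oplus r}\to\bigoplus_i\cL_i^{-1}\to T_X\to 0$ obtained from \eqref{EulerSequence}: its inclusion is a subbundle map determined by $r$ global sections of $\bigoplus_i\cL_i^{-1}$, and analysing these sections should exhibit the classes $[\cL_i^{-1}]$ by honest divisors and thereby force them into the torsion-free part of $\Pic X$, so that each $\cL_i$ is realised on the nose as $\cO(-D_{\lambda_i})$ with $\lambda_i\in\Lambda$. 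Once this is in place, Theorem \ref{GenJaczewskiThm} applies verbatim and yields that $X$ is toric with $\Lambda=\Pic X=\Cl X$; the conclusion is in any case consistent, since a smooth complete toric variety has torsion-free Picard group.
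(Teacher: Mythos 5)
Your torsion-free case is exactly the paper's first step: with $\H^1(X,\cO_X)=\H^2(X,\cO_X)=0$ one takes $\Lambda=\Pic X$, gets $\Lambda_\bC=\H^1(X,\Omega_X)$, hence $\cR_\Lambda=\cR_X$, and applies Theorem \ref{GenJaczewskiThm}; you also correctly diagnose (with the Enriques example) that torsion in $\Pic X\iso\H^2(X,\bZ)$ is not excluded by the hypotheses and is the real difficulty. But your treatment of that difficulty is a genuine gap, not a proof: ``analysing these sections should exhibit the classes $[\cL_i^{-1}]$ by honest divisors and thereby force them into the torsion-free part'' is a hope, and the proposed mechanism cannot work as stated. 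Theorem \ref{GenJaczewskiThm} needs each class $-[\cL_i]$ to lie \emph{exactly} in a free lattice $\Lambda$ on which $c_1$ is injective, i.e.\ the subgroup of $\Pic X$ generated by the $[\cL_i^{-1}]$ (and an ample class) must contain no non-zero torsion element. Exhibiting each $[\cL_i^{-1}]$ as the class of an effective divisor does not achieve this: on an Enriques surface the two half-fibers $E_1,E_2$ of an elliptic pencil are effective, yet $E_1-E_2$ is a non-zero $2$-torsion class, so integral combinations of effective classes can perfectly well be torsion. Nothing in your dualized Euler sequence rules this phenomenon out for the $\cL_i$.

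The paper closes this gap by a genuinely different mechanism, which is the actual content of its proof. Writing $G\subset\Pic X$ for the finite torsion subgroup, it forms the $G$-graded $\cO_X$-algebra $\widehat{\cS}_G=\bigoplus_{D\in G}\cO_X(D)$ and the associated finite \'etale cover $\pi:\widehat{Y}_G=\Spec_{\cO_X}\widehat{\cS}_G\to X$ with Galois group $G$. By \cite{KKV} the group $\Pic\widehat{Y}_G$ is torsion free; since $\pi$ is \'etale and the Euler extension is defined locally, $\pi^*\cR_X$ is the Euler sheaf of $\widehat{Y}_G$ for $\Lambda=\Pic\widehat{Y}_G$, and it splits into line bundles because $\cR_X$ does. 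Theorem \ref{GenJaczewskiThm} now applies \emph{upstairs} and shows that $\widehat{Y}_G$ is toric. Finally, $G$ acts freely on $\widehat{Y}_G$, whereas by the holomorphic Lefschetz fixed-point formula every element of $G$ has a fixed point on the toric variety $\widehat{Y}_G$ (its Lefschetz number is $1$ since $\H^i(\widehat{Y}_G,\cO)=0$ for $i>0$); hence $G$ is trivial. It is this contradiction, not any direct analysis of the splitting on $X$, that eliminates torsion; to complete your proof you would need to supply this argument or an equivalent substitute for it.
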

\begin{proof}
  We note that, by assumptions, it follows that $\Pic
  X=\H^2(X,\bZ)$ is a finitely generated abelian group.

  First, let us assume that $\Pic X$ has no torsion. In this case we
  let $\Lambda=\Pic X$. Then, because of our assumptions,
  $\Lambda_\bC=\H^1(X,\Omega_X)$ and therefore $\cR_\Lambda=\cR_X$ is
  the dual of the potential sheaf.  Thus we are in the situation of
  \ref{GenJaczewskiThm} and the corollary follows.

  Now, we consider the general case when, possibly, the torsion part
  of $\Pic X$ is non-zero, we denote it by $G$ and we will show that
  it is impossible. The group $G$ is finite. We can lift $G$ to $\CDiv
  X$ and treat its elements as Cartier divisors. We define $G$-graded
  sheaf of $\cO_X$-algebras
  $$\widehat{\cS}_G=\bigoplus_{D\in  G}\cO_X(D)$$
  where, for every Zariski open $U\subset X$, we define
  $\cO_X(D)(U)=\{f\in\bC^*(X): \pd_U(f)+D_{|U}\geq 0\}\cup\{0\}$, and
  the multiplication is as in $\bC(X)$. As in~\cite{ADHL}, see also
  Section \ref{CharacteristicSpacesSection}, we define
  $\widehat{Y}_G=\Spec_{\cO_X}\widehat{\cS}_G$, the characteristic
  space of $G\subset\Pic X$. The resulting morphism
  $\pi:\widehat{Y}_G\ra X$ is \'etale with Galois group $G$. We note
  that this is a slight generalization of the well known construction
  of unramified covering, see e.g.~\cite[Section~I.17]{BPVdV}

  Now we use \cite[Lemma 2.2, Prop.~4.2]{KKV} to conclude that
$\Pic\widehat{Y}_G$ is torsion free. Since the morphism $\pi$ is \'etale and the definition
of the Euler sheaf is local (c.f. Section~\ref{GenEulerSection}), we conclude that
the Euler sheaf $\cR_\Lambda$, where $\Lambda=\Pic \widehat{Y}_G$, is a pullback
of the potential sheaf $\cR_X.$ By Theorem~\ref{GenJaczewskiThm} the variety $\widehat{Y}_G$ is
toric.

The finite group $G$ acts freely on $\widehat{Y}_G$, however, any element of $G$ has a fixed point on $\widehat{Y}_G$ by the
holomorphic Lefschetz fixed-point formula (see~\cite{AtiyahBott} or \cite{GriffithsHarris})
which is impossible unless $G$ is trivial.

\end{proof}

\def\polhk#1{\setbox0=\hbox{#1}{\ooalign{\hidewidth
  \lower1.5ex\hbox{`}\hidewidth\crcr\unhbox0}}}


\begin{thebibliography}{BCHM10}

\bibitem[ADHL10]{ADHL}
Ivan~V. Arzhantsev, Ulrich Derenthal, J{\"u}rgen Hausen, and Antonio Laface.
\newblock {C}ox rings.
\newblock {\em {\rm arXiv:1003.4229 [math.AG]}}, 2010.

\bibitem[AH09]{ArzhantsevHausen}
Ivan~V. Arzhantsev and J{\"u}rgen Hausen.
\newblock Geometric invariant theory via {C}ox rings.
\newblock {\em J. Pure Appl. Algebra}, 213(1):154--172, 2009.

\bibitem[Ati57]{Atiyah}
M.~F. Atiyah.
\newblock Complex analytic connections in fibre bundles.
\newblock {\em Trans. Amer. Math. Soc.}, 85:181--207, 1957.

\bibitem[Ati56]{AtiyahKrullSchmidt}
M.~F. Atiyah.
\newblock On the {K}rull-{S}chmidt theorem with application to sheaves.
\newblock {\em Bull. Soc. Math. France}, 84:307--317, 1956.

\bibitem[AB68]{AtiyahBott}
M.~F. Atiyah and R.~Bott.
\newblock A {L}efschetz fixed point formula for elliptic complexes. {II}.
  {A}pplications.
\newblock {\em Ann. of Math. (2)}, 88:451--491, 1968.

\bibitem[BPVdV84]{BPVdV}
W.~Barth, C.~Peters, and A.~Van~de Ven.
\newblock {\em Compact complex surfaces}, volume~4 of {\em Ergebnisse der
  Mathematik und ihrer Grenzgebiete (3) [Results in Mathematics and Related
  Areas (3)]}.
\newblock Springer-Verlag, Berlin, 1984.

\bibitem[BM86]{BatyrevMelnikov}
V.~V. Batyrev and D.~A. Melnikov.
\newblock A theorem on nonextendability of toric varieties.
\newblock {\em Vestnik Moskov. Univ. Ser. I Mat. Mekh.}, (3):20--24, 118, 1986.

\bibitem[Bea00]{Beauville}
Arnaud Beauville.
\newblock Complex manifolds with split tangent bundle.
\newblock In {\em Complex analysis and algebraic geometry}, pages 61--70. de
  Gruyter, Berlin, 2000.

\bibitem[BH07]{BerchtoldHausen}
Florian Berchtold and J{\"u}rgen Hausen.
\newblock Cox rings and combinatorics.
\newblock {\em Trans. Amer. Math. Soc.}, 359(3):1205--1252 (electronic), 2007.

\bibitem[BCHM10]{BCHM}
Caucher Birkar, Paolo Cascini, Christopher~D. Hacon, and James McKernan.
\newblock Existence of minimal models for varieties of log general type.
\newblock {\em J. Amer. Math. Soc.}, 23(2):405--468, 2010.

\bibitem[BFJ09]{BFJ}
S{\'e}bastien Boucksom, Charles Favre, and Mattias Jonsson.
\newblock Differentiability of volumes of divisors and a problem of {T}eissier.
\newblock {\em J. Algebraic Geom.}, 18(2):279--308, 2009.

\bibitem[BPT06]{BPT}
Marco Brunella, Jorge~Vit{\'o}rio Pereira, and Fr{\'e}d{\'e}ric Touzet.
\newblock K\"ahler manifolds with split tangent bundle.
\newblock {\em Bull. Soc. Math. France}, 134(2):241--252, 2006.

\bibitem[CP02]{CampanaPeternell}
Fr{\'e}d{\'e}ric Campana and Thomas Peternell.
\newblock Projective manifolds with splitting tangent bundle. {I}.
\newblock {\em Math. Z.}, 241(3):613--637, 2002.

\bibitem[Cox95a]{CoxTohoku}
David~A. Cox.
\newblock The functor of a smooth toric variety.
\newblock {\em Tohoku Math. J. (2)}, 47(2):251--262, 1995.

\bibitem[Cox95b]{CoxJAG}
David~A. Cox.
\newblock The homogeneous coordinate ring of a toric variety.
\newblock {\em J. Algebraic Geom.}, 4(1):17--50, 1995.

\bibitem[CLS11]{CoxLittleSchenck}
David~A. Cox, John~B. Little, and Henry~K. Schenck.
\newblock {\em Toric varieties}, volume 124 of {\em Graduate Studies in
  Mathematics}.
\newblock American Mathematical Society, Providence, RI, 2011.

\bibitem[Dru00]{Druel}
St{\'e}phane Druel.
\newblock Vari\'et\'es alg\'ebriques dont le fibr\'e tangent est totalement
  d\'ecompos\'e.
\newblock {\em J. Reine Angew. Math.}, 522:161--171, 2000.

\bibitem[Eis95]{Eisenbud}
David Eisenbud.
\newblock {\em Commutative algebra, with a view toward algebraic geometry},
  volume 150 of {\em Graduate Texts in Mathematics}.
\newblock Springer-Verlag, New York, 1995.

\bibitem[EKW04]{EKW}
E.~Javier Elizondo, Kazuhiko Kurano, and Kei-ichi Watanabe.
\newblock The total coordinate ring of a normal projective variety.
\newblock {\em J. Algebra}, 276(2):625--637, 2004.

\bibitem[GR10]{GrebRollenske}
Daniel Greb and Sohnke Rollenske.
\newblock Torsion and cotorsion in the sheaf of {K}\"ahler differentials on
  some mild singularities.
\newblock {\em {\rm arXiv:1012.5940 [math.AG]}}, 2010.

\bibitem[GH94]{GriffithsHarris}
Phillip Griffiths and Joseph Harris.
\newblock {\em Principles of algebraic geometry}.
\newblock Wiley Classics Library. John Wiley \& Sons Inc., New York, 1994.
\newblock Reprint of the 1978 original.

\bibitem[Har77]{Hartshorne}
Robin Hartshorne.
\newblock {\em Algebraic geometry}.
\newblock Springer-Verlag, New York, 1977.
\newblock Graduate Texts in Mathematics, No. 52.

\bibitem[Har80]{Hartshorne2}
Robin Hartshorne.
\newblock Stable reflexive sheaves.
\newblock {\em Math. Ann.}, 254(2):121--176, 1980.

\bibitem[H{\"o}r08]{Horing2}
Andreas H{\"o}ring.
\newblock The structure of uniruled manifolds with split tangent bundle.
\newblock {\em Osaka J. Math.}, 45(4):1067--1084, 2008.

\bibitem[H{\"o}r07]{Horing1}
Andreas H{\"o}ring.
\newblock Uniruled varieties with split tangent bundle.
\newblock {\em Math. Z.}, 256(3):465--479, 2007.

\bibitem[HK00]{HuKeel}
Yi~Hu and Sean Keel.
\newblock Mori dream spaces and {GIT}.
\newblock {\em Michigan Math. J.}, 48:331--348, 2000.
\newblock Dedicated to William Fulton on the occasion of his 60th birthday.

\bibitem[Jac94]{Jaczewski}
Krzysztof Jaczewski.
\newblock Generalized {E}uler sequence and toric varieties.
\newblock In {\em Classification of algebraic varieties ({L}'{A}quila, 1992)},
  volume 162 of {\em Contemp. Math.}, pages 227--247. Amer. Math. Soc.,
  Providence, RI, 1994.

\bibitem[Kni73]{Knighten}
Carol~M. Knighten.
\newblock Differentials on quotients of algebraic varieties.
\newblock {\em Trans. Amer. Math. Soc.}, 177:65--89, 1973.

\bibitem[KKV89]{KKV}
Friedrich Knop, Hanspeter Kraft, and Thierry Vust.
\newblock The {P}icard group of a {$G$}-variety.
\newblock In {\em Algebraische {T}ransformationsgruppen und
  {I}nvariantentheorie}, volume~13 of {\em DMV Sem.}, pages 77--87.
  Birkh\"auser, Basel, 1989.


\bibitem[KP85]{KraftPopov}
Hanspeter Kraft and Vladimir~L. Popov.
\newblock Semisimple group actions on the three-dimensional affine space are
  linear.
\newblock {\em Comment. Math. Helv.}, 60(3):466--479, 1985.

\bibitem[KPSW00]{KPSW}
Stefan Kebekus, Thomas Peternell, Andrew~J. Sommese, and Jaros{\l}aw~A.
  Wi{\'s}niewski.
\newblock Projective contact manifolds.
\newblock {\em Invent. Math.}, 142(1):1--15, 2000.

\bibitem[LV09]{LafaceVelasco}
Antonio Laface and Mauricio Velasco.
\newblock A survey on {C}ox rings.
\newblock {\em Geom. Dedicata}, 139:269--287, 2009.

\bibitem[Mat80]{Matsumura}
Hideyuki Matsumura.
\newblock {\em Commutative algebra}, volume~56 of {\em Mathematics Lecture Note
  Series}.
\newblock Benjamin/Cummings Publishing Co., Inc., Reading, Mass., second
  edition, 1980.

\bibitem[McK01]{McKernan}
James McKernan.
\newblock A simple characterisation of toric varieties.
\newblock In {\em Proc. Algebraic Geom. Symp.}, pages 59--72. Kinosaki, 2001.

\bibitem[Pro03]{Prokhorov2}
Yu.~G. Prokhorov.
\newblock An application of a formula for the canonical divisor.
\newblock {\em Tr. Mat. Inst. Steklova}, 241(Teor. Chisel, Algebra i Algebr.
  Geom.):210--217, 2003.

\bibitem[Pro01]{Prokhorov1}
Yuri~G. Prokhorov.
\newblock On a conjecture of {S}hokurov: characterization of toric varieties.
\newblock {\em Tohoku Math. J. (2)}, 53(4):581--592, 2001.

\bibitem[Sho00]{Shokurov}
V.~V. Shokurov.
\newblock Complements on surfaces.
\newblock {\em J. Math. Sci. (New York)}, 102(2):3876--3932, 2000.
\newblock Algebraic geometry, 10.

\bibitem[{\'S}wi99]{Swiecicka}
Joanna {\'S}wi{\polhk{e}}cicka.
\newblock Quotients of toric varieties by actions of subtori.
\newblock {\em Colloq. Math.}, 82(1):105--116, 1999.

\bibitem[W{\l}o93]{Wlodarczyk}
Jaros{\l}aw W{\l}odarczyk.
\newblock Embeddings in toric varieties and prevarieties.
\newblock {\em J. Algebraic Geom.}, 2(4):705--726, 1993.

\bibitem[Zar62]{Zariski}
Oscar Zariski.
\newblock The theorem of {R}iemann-{R}och for high multiples of an effective
  divisor on an algebraic surface.
\newblock {\em Ann. of Math. (2)}, 76:560--615, 1962.


\end{thebibliography}
\end{document}